\documentclass[a4paper,11pt]{article}

\usepackage[english]{babel}
\usepackage[english]{translator}
\usepackage[T1]{fontenc}
\usepackage[utf8]{inputenc}

\usepackage{amsmath}
\usepackage{amssymb}
\usepackage{amsfonts}
\usepackage{amstext}
\usepackage{amsthm}
\usepackage{bbm}

\usepackage{graphicx}
\usepackage{color}
\usepackage{subfigure}
\usepackage{float}
\usepackage{marvosym}
\usepackage{enumerate}
\usepackage{enumitem}
\usepackage{booktabs}
\usepackage{hyperref}
\usepackage[
  hmarginratio={1:1},     
  vmarginratio={1:1},     
  textwidth=460pt,        
  heightrounded,          
  bottom=3.65cm,
  top=3.65cm,
]{geometry}

\hypersetup{
  pdffitwindow=false,
  pdfhighlight=/O,
  pdfnewwindow,
  colorlinks=true,
  citecolor=red,             
  linkcolor=blue,            
  menucolor=blue,            
  urlcolor=blue,            
  pdfpagemode=UseOutlines,
  bookmarksnumbered=true,
  linktocpage,
  pdfkeywords={},
  pdfcreator={pdflatex},
  pdfproducer={LaTeX with hyperref}
}

\newcommand{\C}{\mathbb{C}}    
\newcommand{\R}{\mathbb{R}}       
\newcommand{\N}{\mathbb{N}}           

\renewcommand{\Re}{\mathrm{Re}\,}

\renewcommand{\i}{\mathrm{i}}

\newcommand{\A}{\boldsymbol{A}}
\renewcommand{\H}{\boldsymbol{H}}
 
\renewcommand{\div}{\mathrm{div} \,}		
\newcommand{\curl}{\mathrm{curl} \,}
\newcommand{\ls}{\lesssim}
\newcommand{\EGL}{E_{\mbox{\tiny GL}}} 
\newcommand{\dx}{\,\,\textnormal{d}x}

\newcommand{\argmin}[1]{\underset{#1}{\operatorname{arg}\operatorname{min}} \,}

\theoremstyle{definition}
\newtheorem{definition}{Definition}[]

\newtheorem{algorithm}[definition]{Algorithm}

\theoremstyle{plain}

\newtheorem{lemma}[definition]{Lemma}

\newtheorem{conjecture}[definition]{Conjecture}

\allowdisplaybreaks

\begin{document}
\begin{center}
{\LARGE 
On second-order optimality in the high-$\kappa$ regime of the Ginzburg–Landau model
}
\end{center}

\begin{center}
{\large Christian D\"oding\footnote[1]{Institute for Numerical Simulation, University of Bonn, D-53115 Bonn, Germany, \\ e-mail: \textcolor{blue}{doeding@ins.uni-bonn.de}.}}\\[2em]
\end{center}

\noindent
\begin{center}
\begin{minipage}{0.8\textwidth}
  {\small
    \textbf{Abstract.} We study energy minimizers of the Ginzburg–Landau (GL) free energy, a fundamental model of superconductivity. We address the high-$\kappa$ regime, the regime of a large GL parameter, in which energy minimizers exhibit vortex structures whose finite element approximations require a fine mesh resolution. This difficulty is reflected in the error analysis of discrete minimizers, which relies on a second-order optimality condition. The spectrum of the energy's second Fréchet derivative must be bounded away from zero up to symmetry. In practice, the associated spectral gap decreases rapidly with the GL parameter. This degrades the quality of the approximations because the GL parameter directly enters as an additional factor in the error estimates. Although a polynomial dependence of the spectral gap on the GL parameter has been conjectured, its precise behavior remains unclear. As a first step toward addressing this issue, we compute the spectral gap based on a finite element approximation for a range of GL parameters, providing numerical evidence for the conjectured polynomial dependence.
}
\end{minipage}
\end{center}

\vspace{12pt}
\noindent
\textbf{Key words. Ginzburg–Landau equation, superconductivity, finite element method, optimization}

\section{Introduction}

In this work, we study the Ginzburg--Landau (GL) problem with a prescribed magnetic vector potential, which seeks a complex-valued function \(u\) that minimizes the GL energy functional
\begin{equation} \label{energy}
 E(u) = \frac{1}{2} \int_{\Omega} \Big| \frac{\i}{\kappa} \nabla u + \A u \Big|^2 + \frac{1}{2} (1 - |u|^2)^2 \, \mathrm{d}x.
\end{equation}
Here, $\Omega \subset \R^d$, $d = 2,3$, denotes a bounded domain, $\A : \Omega \to \R^d$ is a given magnetic vector potential, and $\kappa > 0$ is the GL parameter. The so-called order parameter $u$ minimizing the energy \eqref{energy} can exhibit complex pattern formation in the form of quantized vortices -- localized regions where the order parameter vanishes. The structure and density of these vortex patterns, known as Abrikosov vortex lattices \cite{Abr04}, depend sensitively on $\kappa$. In the high-$\kappa$ regime, where $\kappa$ is large, the number of vortices increases while their characteristic size decreases, creating significant challenges for accurate numerical resolution. This difficulty is closely related to the influence of $\kappa$ on the second-order optimality condition for minimizers: as $\kappa$ grows, the coercivity constant of the second derivative of the energy functional appears to decrease drastically. The exact dependence of this coercivity constant on $\kappa$ is unknown, but numerical evidence from previous works \cite{BDH25, CFH25, DDH24,DH24} suggests a polynomial decay in $\kappa$. The focus of this work is a systematic computation of the coercivity constant, leading to a numerical verification of this decay and an estimation of its rate w.r.t.~the GL parameter $\kappa$. \\

The model \eqref{energy} arises as a reduction of the GL theory of superconductivity \cite{DGP92,Landau,Tinkham}, the phenomenon of certain materials losing their 
electrical resistance at very low temperatures. The macroscopic GL theory of phase transitions describes  the state of a superconductor through a minimizer of the full GL free energy
\begin{equation} \label{full-energy}
\EGL(u,\A) = \frac12 \int_{\Omega}  
\Big| \frac{\i}{\kappa} \nabla u + \A u \Big|^2 
+ \frac12 \big(1 - |u|^2 \big)^2 + |\curl \A - \H |^2 \dx.
\end{equation}
The order parameter $u$ describing the superconducting 
properties of the material has the physical observable $|u|^2$ representing the density 
of superconducting electrons -- so-called Cooper pairs \cite{Tinkham}. In particular, $|u| = 0$ corresponds to the normal (non-superconducting) state, while $|u| = 1$ represents a perfectly superconducting state. The second variable $\A$ denotes the electromagnetic 
vector potential in the superconductor, whose curl corresponds to the magnetic field and tends to align with the externally applied magnetic field $\H$. Between the two perfect states $|u| = 0$ and $|u| = 1$, Abrikosov identified the aforementioned mixed vortex state where quantized vortices occur in the order parameter which locally destroys 
superconductivity and allows partial penetration of the external magnetic 
field, cf. \cite{Abr04}. Understanding and computing such vortex configurations is of considerable interest as it contributes to the understanding of the phenomena. As a common reduction of the model we obtain the reduced energy \eqref{energy} from \eqref{full-energy} pretending that the vector potential $\A$ is given a priori.

The numerical approximation of GL minimizers using finite element 
approximation was extensively studied in the seminal works of Du, Gunzburger, and Peterson \cite{DGP92,DuGP93}, 
where both well-posedness and optimal order approximation of \eqref{full-energy} 
in finite element spaces were established. Alternative approaches, such as 
finite volume and co-volume methods, have also been proposed \cite{Du05,DuNicolaidesWu98}. 
However, the error analyses in these works did not explicitly track the 
dependence on the material parameter $\kappa$. More recent studies \cite{BDH25,CFH25,DH24} for the reduced model \eqref{energy}, and later 
for the full model \cite{DDH24}, revealed the presence of a 
$\kappa$-dependent pollution effect. The effect imposes resolution conditions 
on the computational mesh that are required to correctly capture energy 
minimizers and, in particular, the vortex structures. These resolution 
conditions involve the aforementioned coercivity constant appearing in the 
second-order optimality conditions of the GL energy leading to an interest of its behavior in the high-$\kappa$ regime. We mention that there is also a large literature on time-dependent Ginzburg-Landau problems regarding $L^2$- or Schr\"odinger flows of the energy \eqref{energy} or \eqref{full-energy}. We exemplarily refer to \cite{CorsoKemlinMelcherStamm2025,GJX19,GaoS18,LiZ15,LiZ17} and the references therein. 

We conclude the introduction with an overview of the article. In Section~\ref{sec:GL}, we introduce the analytical framework of the GL problem and review established results on the existence and stability of GL energy minimizers. We then discuss the first- and second-order optimality conditions, introduce the coercivity constant of the second Fr\'echet derivative that is central to our study, and formulate a conjecture on its decay. Furthermore, we explain its relation to the spectral gap and to the second-smallest eigenvalue of the second derivative. In Section~\ref{sec:computation}, we present the numerical discretization and the iterative method used to compute GL minimizers, and discuss recently proven error estimates together with the pollution effect in the GL problem and its relation to the second order optimality condition. Section~\ref{sec:numerics} is devoted to the main contribution of this work. There, we present numerical experiments computing GL minimizers for several parameter settings, verify the conjectured polynomial decay of the coercivity constant in the second-order optimality condition, and provide an estimate for its rate.

\section{The Ginzburg-Landau model} \label{sec:GL}

Let $\Omega \subset \mathbb{R}^d$, $d=2,3$, be a bounded domain with Lipschitz boundary $\partial \Omega$, and denote by $L^2 = L^2(\Omega,\C)$ the space of complex-valued, square-integrable functions equipped with the real inner product $(v,w) = \Re \int_{\Omega} v \, \overline{w} \dx$ and norm $\| \cdot \|_{L^2}$. Further, denote by $H^k = H^k(\Omega, \mathbb{C})$ the standard Sobolev space of complex-valued 
functions with derivatives in $L^2$ up to order $k \in \N$ and denote by $\| \cdot \|_{H^k}$ and $|\cdot |_{H^k}$ the associated norms and semi-norms. \\
The GL problem then reads as finding a minimizer
\begin{equation} \label{minimizer}
u = \argmin{v \in H^1} E(v).
\end{equation}
Throughout the article we assume for the energy $E$ that $\kappa > 0$ and that the vector potential $\A$ is bounded, has vanishing weak divergence, and has zero-normal trace on $\Omega$, i.e., $\A \in \big( L^\infty(\Omega,\R)\big)^d$, $\div \A = 0$ in $\Omega$, and $\A \cdot \mathbf{n} = 0$ on $\partial \Omega$ where $\mathbf{n}$ is the outer normal unit vector on $\partial \Omega$. Finally, we use the notation  $a \ls b$, if there exists a constant $C > 0$ which is independent of $\kappa$ and any mesh sizes such that $a \le Cb$ holds. \\

Under these general assumptions one proves existence of a GL energy minimizer by a classical compactness argument, which also applies to the full model \eqref{full-energy} (cf.  \cite{DGP92}). Furthermore, stability bounds on $u$ can be shown that are crucial for the established analysis of the GL problem \cite{DH24,DGP92}. We note these well-known properties in the following lemma and recap the proof of existence here for the sake of completeness.

\begin{lemma}[existence and stability bounds] \label{lem:existence}
Under the general assumptions, there exists at least one minimizer $u \in H^1$ of $E$ such that \eqref{minimizer} holds and
\begin{align*}
	0 \le | u | \le 1, \quad \| \nabla u \|_{L^2} + \kappa \| u \|_{L^2} \ls \kappa.
\end{align*}
If in addition $\partial \Omega$ is of class $C^2$ or $\Omega$ is a convex polytope, then $u \in H^2$ with
\begin{align*}
	|u|_{H^2} \ls \kappa^2.
\end{align*}
\end{lemma}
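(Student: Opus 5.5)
Existence follows from the direct method of the calculus of variations. Since $E \ge 0$, take a minimizing sequence $(u_n) \subset H^1$ with $E(u_n) \to \inf E$; note $E(0) = \tfrac12|\Omega|$, so we may assume $E(u_n) \le \tfrac12|\Omega|$. Coercivity comes from two observations. First, $\tfrac14\int(1-|u_n|^2)^2 \le E(u_n)$ bounds $\|u_n\|_{L^4}$ and hence $\|u_n\|_{L^2}$ uniformly. Second, $\|\tfrac{\i}{\kappa}\nabla u_n\|_{L^2} \le \|\tfrac{\i}{\kappa}\nabla u_n + \A u_n\|_{L^2} + \|\A\|_{L^\infty}\|u_n\|_{L^2}$ bounds the gradient. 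So $(u_n)$ is bounded in $H^1$. Extract a subsequence with $u_n \rightharpoonup u$ in $H^1$. By Rellich ($d\le3$) we may also take $u_n \to u$ in $L^2$ and in $L^p$ for $p<6$, in particular in $L^4$. Then $\tfrac{\i}{\kappa}\nabla u_n + \A u_n \rightharpoonup \tfrac{\i}{\kappa}\nabla u + \A u$ in $L^2$, and weak lower semicontinuity of the norm, together with strong $L^4$ convergence in the potential term, gives $E(u) \le \liminf E(u_n)$. Hence $u$ is a minimizer.

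For $|u| \le 1$, use a truncation argument. Define $\tilde u = u$ where $|u|\le 1$ and $\tilde u = u/|u|$ otherwise. This is the composition with the projection onto the closed unit disk, which is $1$-Lipschitz, so $\tilde u \in H^1$. On $\{|u|>1\}$ write $u = \rho e^{\i\theta}$ locally. Then $|\tfrac{\i}{\kappa}\nabla u + \A u|^2 = \tfrac{1}{\kappa^2}|\nabla\rho|^2 + \rho^2|\tfrac{1}{\kappa}\nabla\theta - \A|^2$ pointwise. Truncation sets $\rho=1$, which kills the first term and decreases the second. To make this rigorous without phases, use the identity $|\tfrac{\i}{\kappa}\nabla u + \A u|^2 = \tfrac{1}{\kappa^2}|\nabla|u||^2 + |u|^2|\tfrac{\i}{\kappa}\nabla(u/|u|) + \A (u/|u|)|^2$, valid where $u \neq 0$. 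The potential term strictly decreases wherever $|u|>1$. Thus $E(\tilde u) \le E(u)$, with strict inequality if $\{|u|>1\}$ has positive measure. So a minimizer can always be chosen with $|u|\le1$; in fact every minimizer satisfies it.

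The stability bounds then follow. From $|u|\le1$ we get $\kappa\|u\|_{L^2} \le \kappa|\Omega|^{1/2}$. Minimality gives $E(u) \le E(0) = \tfrac12|\Omega|$, so $\|\tfrac{\i}{\kappa}\nabla u + \A u\|_{L^2} \ls 1$ and therefore $\|\nabla u\|_{L^2} \le \kappa(1 + \|\A\|_{L^\infty}|\Omega|^{1/2}) \ls \kappa$.

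For $H^2$ regularity, use the Euler–Lagrange equation. Since $\div\A=0$, it reads
\[ -\tfrac{1}{\kappa^2}\Delta u + \tfrac{2\i}{\kappa}\A\cdot\nabla u + |\A|^2 u + (|u|^2-1)u = 0 \]
with the natural boundary condition $(\tfrac{\i}{\kappa}\nabla u + \A u)\cdot\mathbf n = 0$. Because $\A\cdot\mathbf n=0$, this reduces to $\partial_{\mathbf n}u = 0$. So $u$ solves the Neumann problem $-\Delta u + u = f$ with
\[ f := u - \kappa^2\big(\tfrac{2\i}{\kappa}\A\cdot\nabla u + |\A|^2u + (|u|^2-1)u\big). \]
Using $|u|\le1$ and the gradient bound, $\|f\|_{L^2} \ls \kappa\|\nabla u\|_{L^2} + \kappa^2 \ls \kappa^2$. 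Elliptic $H^2$ regularity for the Neumann problem then gives $\|u\|_{H^2} \ls \|f\|_{L^2} \ls \kappa^2$. This regularity holds on $C^2$ domains and on convex polytopes (Grisvard). The main obstacle I expect is making this step rigorous. One must justify that the weak Euler–Lagrange equation with test functions in $H^1$ encodes the Neumann condition. One must also check that the first-order term $\A\cdot\nabla u$ only requires $\A\in L^\infty$: it enters as an $L^2$ right-hand side, so no differentiability of $\A$ is needed, and the weak-divergence assumption is used only to drop $\div\A$ from the expansion.
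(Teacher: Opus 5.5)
Your proposal is correct. The existence part coincides with the paper's direct-method argument: coercivity from the potential and kinetic terms, weak $H^1$/strong $L^4$ compactness, and weak lower semicontinuity. The only slip is $E(0)=\tfrac14|\Omega|$, not $\tfrac12|\Omega|$, which changes nothing.

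The genuine difference lies in the bounds. The paper does not prove $|u|\le 1$, $\|\nabla u\|_{L^2}+\kappa\|u\|_{L^2}\ls\kappa$ or $|u|_{H^2}\ls\kappa^2$; it refers to \cite{CFH25} and \cite{DH24}. You derive them directly in three steps:
\begin{enumerate}
\item truncation by the projection onto the unit disk, combined with the splitting $|\tfrac{\i}{\kappa}\nabla u+\A u|^2=\kappa^{-2}|\nabla|u||^2+|u|^2|\tfrac{\i}{\kappa}\nabla w+\A w|^2$, $w=u/|u|$ (this is correct, since the cross term contains $\Re(w\nabla\overline{w})=\tfrac12\nabla|w|^2=0$ and $\A$ is real);
\item the comparison $E(u)\le E(0)$;
\item Neumann $H^2$-regularity for $-\Delta u+u=f$ with $\|f\|_{L^2}\ls\kappa^2$.
\end{enumerate}
The citation route is shorter. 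Yours is self-contained, shows that every minimizer (not just one) satisfies $|u|\le 1$, and makes visible that the constants depend only on $|\Omega|$, $\|\A\|_{L^\infty}$ and the regularity constant of $\Omega$, with no derivative of $\A$ needed. If you test $E'(u)$ with $u-\tilde u$ instead of comparing energies, you get $|u|\le 1$ for every critical point, which is slightly more general.

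The step you flag as the main obstacle is routine. Testing $\langle E'(u),\cdot\rangle=0$ with both $v$ and $\i v$ gives the complex weak equation. Since $\int_\Omega \A\cdot\nabla\phi\dx=0$ for all $\phi\in W^{1,1}(\Omega)$ (the weak meaning of $\div\A=0$, $\A\cdot\mathbf{n}=0$, extended by density because $\A\in L^\infty$), you may take $\phi=u\overline{v}\in W^{1,1}$. This turns $-\tfrac{\i}{\kappa}\int_\Omega u\A\cdot\nabla\overline{v}\dx$ into $\tfrac{\i}{\kappa}\int_\Omega(\A\cdot\nabla u)\overline{v}\dx$. You then obtain exactly $\int_\Omega\nabla u\cdot\nabla\overline{v}+u\overline{v}\dx=\int_\Omega f\overline{v}\dx$ for all $v\in H^1$, i.e.\ the weak Neumann problem. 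The $C^2$/convex regularity theory applies to its real and imaginary parts separately.
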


\begin{proof}
Since $E(v) \ge 0$, it is bounded from below. Let $\{v_n\}_{n \in \N}$ be a minimizing sequence such that $E(v_n) \rightarrow \inf_{v \in H^1} E(v) =: E_\infty$ as $n \rightarrow \infty$. Now for any $v \in H^1$ we estimate
\begin{align*}
	\| v \|^2_{L^2} \lesssim \|1 - |v| \|^2_{L^2} + |\Omega| \lesssim \int_\Omega
	\big( 1 - |v| \big)^2 \big( 1 + |v| \big)^2 \dx + |\Omega| \lesssim  E(v) + |\Omega|
\end{align*}
and
\begin{align*}
	\| \nabla v \|^2_{L^2} \lesssim \| \nabla v +  \A  v \|^2_{L^2} + \| \A\|^2_{L^\infty} \| v \|^2_{L^2} \lesssim  E(v) + |\Omega|.
\end{align*}
Thus, $\{ v_n \}_{n \in \N}$ is bounded in $H^1$ and since $H^1$ embeds into $L^4$ compactly we can extract a subsequence, again denoted by $\{ v_n \}_{n \in \N}$, such that	$v_n \rightarrow u$ strongly in $L^4$ and $v_n \rightarrow u$ weakly in $H^1$ for some limit $u \in H^1$. Since $E$ is weakly lower semicontinuous in $H^1$ we conclude
\begin{align*}
	E(u) \le \liminf_{n \rightarrow \infty} E(v_n) = \lim_{n \rightarrow \infty} E(v_n) = E_\infty.  
\end{align*}
Thus, $u \in H^1$ is a minimizer of $E$ and \eqref{minimizer} holds. The proof of the stability bounds can be found e.g. in \cite{CFH25} and \cite{DH24}.
\end{proof}

Due to the stability bounds in Lemma \ref{lem:existence} we introduce the $\kappa$-weighted norm
\begin{align*}
	\| v \|_{H^1_\kappa}^2 := \kappa^{-2} \| \nabla v \|_{H^1_\kappa}^2 + \| v \|_{L^2}^2 
\end{align*}
which is the natural energy norm for the GL problem. Note that for the minimizer $u$ Lemma \ref{lem:existence} implies $\| u \|_{H^1_\kappa} \ls 1$.

\subsection{First- and second-order optimality condition}

It follows from classical optimization theory (e.g. \cite{Struwe,Zeidler}) that the minimizer $u$ from \eqref{minimizer} needs to be a critical point of the energy functional. In other words, let $\langle \cdot, \cdot \rangle$ denote the dual pairing on the real Hilbert space $H^1$. Then the real Fr\'echet derivative of $E$ at $u$ given by
\begin{align*}
	\langle E'(u), v \rangle = \Re \int_{\Omega} \big( \tfrac{\i}{\kappa} \nabla u + \A u \big) \overline{\big( \tfrac{\i}{\kappa} \nabla v + \A v \big)} - (1 - |u|^2) u \overline{v} \dx, \quad v \in H^1
\end{align*}
needs to vanish, i.e., $\langle E'(u) ,v \rangle = 0$ for all $v \in H^1$. This first order optimality condition is equivalent to the famous Ginzburg-Landau equation
\begin{align} \label{GLE}
	\Big(\frac{\i}{\kappa} \nabla + \A \Big)^2 u - u + |u|^2u = 0 \quad \text{in } \Omega
\end{align}
with homogeneous Neumann boundary $\nabla u \cdot \mathbf{n} = 0$ on $\partial \Omega$ which holds in strong form if $u$ is $H^2$-regular or needs to be read in weak form in general. However, this is only a necessary but no sufficient condition for the minimum. In particular, every (weak) solution to the GL equation \eqref{GLE} can be a local minimum or even a saddle-point. To ensure that the critical point is at least a local minimum, we consider the second-order optimality condition stating that the second Fr\'echet derivative $E''(u): H^1 \times H^1 \rightarrow \R$ given by
\begin{align*}
	\langle E''(u) v, w \rangle = \Re \int_{\Omega} \big( \tfrac{\i}{\kappa} \nabla v + \A v \big) \overline{\big( \tfrac{\i}{\kappa} \nabla w + \A w \big)} - (1 - |u|^2) v \overline{w} + 2 \Re(u \overline{v}) u \overline{w} \dx, \quad v,w \in H^1
\end{align*}
is positive definite. However, as we show now $E''(u)$ cannot be positive definite since there is a symmetry under complex rotations hidden in the expression of the GL energy: it holds
\begin{align*}
	E(e^{i\theta} v) = E(v) \quad \forall \theta \in [0,2\pi), \, v \in H^1. 
\end{align*}
Thus, if $u$ is a minimizer given by \eqref{minimizer} so is $e^{\i \theta} u$ for any angle $\theta \in [0, 2\pi)$. Consequently the set of minimizers forms a manifold $\mathcal{M}(u) := \{ e^{\i \theta} u : \, \theta \in [0,2\pi) \}$. Locally, this manifold admits the smooth parametrization
\begin{align*}
	\gamma: (-\varepsilon, \varepsilon) \rightarrow \mathcal{M}(u), \quad \gamma(\theta) = e^{\i \theta} u
\end{align*}
for any $0 < \varepsilon < \pi$. Its tangent space at $u$ is spanned by
\begin{align*}
	\gamma'(0) = \frac{\mathrm{d}}{\mathrm{d}\theta} e^{i\theta} u \Big|_{\theta = 0} = \i u.
\end{align*}
It follows using the first order optimality condition and $\gamma''(0) = -u \in H^1$,
\begin{align*}
	0 = \frac{\mathrm{d}^2}{\mathrm{d} \theta^2} E(\gamma(\theta)) \Big|_{\theta = 0} = \langle E''(
	\gamma(0)) \gamma'(0), \gamma'(0) \rangle + \langle E'(\gamma(0)), \gamma''(0) \rangle = \langle E''(u) \i u, \i u \rangle.
\end{align*}
Thus $\gamma'(0) = \i u \in H^1$ lies in the kernel of the second Fr\'echet derivative and hence $E''(u)$ cannot be positive definite. However, this is in general the only symmetry the energy admits. Blocking this neutral direction we can therefore expect the second order optimality condition to be satisfied on the orthogonal complement
\begin{align*}
	(\i u)^\bot := \{ v \in H^1 : \, (\i u, v) = 0 \}.
\end{align*}
Recall here that $(\cdot,\cdot)$ is the real inner product on $L^2$ so that we take the real orthogonal complement of the neutral mode $\i u$. \\

We assume that the minimizer $u$ given by \eqref{minimizer} is \textit{locally quasi-isolated}, i.e., it holds
\begin{align} \label{iso}
	\langle E''(u) v, v \rangle > 0 \quad \forall v \in (\i u)^\bot.
\end{align}
We now make the crucial observation that if the minimizer $u$ given by \eqref{minimizer} is locally-quasi isolated, then the second Fr\'echet derivative $E''(u)$ is coercive on the orthogonal complement $(\i u)^\bot$ w.r.t. the norm $\| \cdot \|_{H^1_\kappa}$.

\begin{lemma} \label{lem:coe}
Under the general assumptions let $u$ be a locally quasi-isolated minimizer given by \eqref{minimizer}. Then there exists a constant $\rho(\kappa) > 0$ such that
\begin{align*}
	\langle E''(u) v ,v \rangle \ge \rho(\kappa)^{-1} \| v \|_{H^1_\kappa}^2 \quad \text{for all } v \in (\i u)^\bot.
\end{align*}
\end{lemma}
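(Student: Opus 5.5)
The plan is to argue by contradiction, using a compactness argument that rests on a Gårding-type inequality for $E''(u)$. Here $\kappa$ is fixed, so $\rho(\kappa)$ may depend on it in any way; only existence is claimed.

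\emph{Step 1 (Gårding inequality).} First I write $E''(u)$ as a compact perturbation of an $H^1_\kappa$-coercive form. For $v \in H^1$, expand
\begin{align*}
\Big| \tfrac{\i}{\kappa}\nabla v + \A v\Big|^2 \ge \tfrac12 \kappa^{-2} |\nabla v|^2 - |\A|^2 |v|^2 .
\end{align*}
By Lemma~\ref{lem:existence}, $|u| \le 1$, so $-(1-|u|^2)|v|^2 \ge -|v|^2$ and $2(\Re(u\overline v))^2 \ge 0$. Together these give
\begin{align*}
\langle E''(u)v,v\rangle \ge \tfrac12 \kappa^{-2}\|\nabla v\|_{L^2}^2 - C_0 \|v\|_{L^2}^2 ,
\end{align*}
with $C_0 = 1 + \|\A\|_{L^\infty}^2$. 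Adding $\tfrac12\|v\|_{L^2}^2$ to both sides yields
\begin{align*}
\langle E''(u)v,v\rangle + C_1\|v\|_{L^2}^2 \ge \tfrac12\|v\|_{H^1_\kappa}^2, \qquad C_1 = C_0 + \tfrac12 .
\end{align*}
I also note that $E''(u)$ is bounded on $H^1$. This uses $u \in L^\infty$ together with $\A \in L^\infty$.

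\emph{Step 2 (contradiction).} Suppose no such $\rho(\kappa)$ exists. Then there are $v_n \in (\i u)^\bot$ with $\|v_n\|_{H^1_\kappa}=1$ and $\langle E''(u)v_n,v_n\rangle \to 0$. For fixed $\kappa$ the sequence is bounded in $H^1$. By Rellich's theorem on the Lipschitz domain $\Omega$, a subsequence satisfies $v_n \rightharpoonup v$ weakly in $H^1$ and $v_n \to v$ strongly in $L^2$. Moreover $v \in (\i u)^\bot$, since that set is closed under weak $L^2$ limits.

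The form $v\mapsto\langle E''(u)v,v\rangle$ is weakly lower semicontinuous on $H^1$. Indeed, the magnetic gradient part is a continuous convex quadratic form, and the zeroth-order terms converge by the strong $L^2$ convergence, using $u\in L^\infty$. Hence
\begin{align*}
\langle E''(u)v,v\rangle \le \liminf_n \langle E''(u)v_n,v_n\rangle = 0 .
\end{align*}
Assumption \eqref{iso} then forces $v=0$. But Step 1 gives $\tfrac12 \le \langle E''(u)v_n,v_n\rangle + C_1\|v_n\|_{L^2}^2 \to C_1\|v\|_{L^2}^2 = 0$, which is a contradiction.

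\emph{Main obstacle.} The delicate point is Step 1, namely securing the Gårding inequality with a compact remainder. That in turn lets the strong $L^2$ limit rule out the degenerate case $v=0$. The key inputs are the $L^\infty$ bound $|u|\le 1$ from Lemma~\ref{lem:existence} and the boundedness of $\A$. The magnetic cross term $\tfrac{2}{\kappa}\Re(\i\nabla v\cdot\overline{\A v})$ must be absorbed via Young's inequality, as in the expansion above. One also has to check that $(\i u)^\bot$ is weakly closed, so that the limit $v$ is admissible in \eqref{iso}. This holds because $v\mapsto(\i u,v)$ is a continuous linear functional on $L^2$.
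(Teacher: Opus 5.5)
Your proof is correct, but it takes a different route from the paper. The paper argues spectrally. It uses Courant--Fischer to identify the spectral gap $\lambda_2=\inf_{v\in(\i u)^\bot}\langle E''(u)v,v\rangle/\|v\|_{L^2}^2>0$, which gives $L^2$-coercivity on $(\i u)^\bot$. It then upgrades this to $H^1_\kappa$-coercivity by a convex combination with the same G{\aa}rding inequality you derive in Step~1. This yields the explicit constant $\rho(\kappa)^{-1}=\lambda_2/\big(2(\lambda_2+\tfrac32+\|\A\|_{L^\infty}^2)\big)$. The paper relies on this explicit dependence on $\lambda_2$ afterwards: it is what lets the numerically computed spectral gap $\lambda_2(\kappa)$ stand in for the coercivity constant. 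Your contradiction argument is non-constructive and gives no such information.

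On the other hand, your argument is more self-contained at the one delicate step. The paper's claim $\lambda_2>0$ tacitly assumes that $E''(u)$ has purely discrete spectrum, so that the infimum over $(\i u)^\bot$ is attained and \eqref{iso} can be applied to a minimizer. Pointwise positivity alone would not give a positive infimum. Discreteness of the spectrum rests on exactly the G{\aa}rding inequality plus compactness of $H^1\hookrightarrow L^2$, which you use explicitly through weak lower semicontinuity.

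The two approaches combine naturally. Since $\|v\|_{H^1_\kappa}\ge\|v\|_{L^2}$, your conclusion implies $\lambda_2\ge\rho(\kappa)^{-1}>0$, and the paper's convex-combination step then recovers the explicit constant.

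A minor remark: $-(1-|u|^2)|v|^2\ge-|v|^2$ holds for any $u$. The bound $|u|\le 1$ is needed only for the boundedness of $E''(u)$ and for passing to the limit in the zeroth-order terms.
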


\begin{proof}
Note that $E''(u): H^1 \rightarrow (H^1)^*$ is symmetric w.r.t.~the dual pairing $\langle \cdot, \cdot \rangle$. Denote by $0 \le \lambda_1 \le \lambda_2 \le \cdots$ the eigenvalues of $E''(u)$. We have shown above that $\lambda_1 = 0$ with eigenfunction $\i u$. Then the Courant-Fisher theorem yields
\begin{align*}
	\lambda_2 = \sup_{w \in H^1, \, \| w \| = 1} \inf_{v \in H^1, \, v \bot w} \frac{\langle E''(u) v, v \rangle}{(v,v)_{L^2}} = \inf_{v \in (\i u)^\bot} \frac{\langle E''(u) v, v \rangle}{(v,v)_{L^2}} > 0.
\end{align*}
This implies
\begin{align} \label{proof:coe1}
	\langle E''(u) v, v \rangle \ge \lambda_2 \| v \|_{L^2}^2 \quad \text{for all } v \in (\i u)^\bot.
\end{align}
Next we note that $|\tfrac{\i}{\kappa} \nabla v + \A v|^2 \ge \tfrac{1}{2 \kappa^2} |\nabla v|^2 - |\A v|^2$ and conclude the following G{\aa}rding inequality, cf. \cite[Lemma 4.1]{CFH25},
\begin{align} \label{proof:coe2}
\begin{split}
	\langle E''(u) v, v \rangle  & = \int_{\Omega} |\tfrac{\i}{\kappa} \nabla v + \A v|^2 + (|u|^2 - 1) |v|^2 + 2\Re (u \overline{v})^2 \dx \\
	& \ge \int_{\Omega} \tfrac{1}{2 \kappa^2} |\nabla v|^2 - |\A v|^2 + (|u|^2 - 1) |v|^2 \dx  \\
	& \ge \int_{\Omega} \tfrac{1}{2 \kappa^2} |\nabla v|^2 + \tfrac12 |v|^2  - \big( \|\A \|_{L^\infty}^2 + |u|^2 - \tfrac{3}{2} \big) |v|^2 \dx  \\
	& \ge \tfrac{1}{2} \| v \|_{H^1_\kappa}^2 - \big( \tfrac{3}{2} + \| \A \|_{L^\infty}^2 \big) \| v \|^2_{L^2} \quad \text{for all } v \in H^1.
\end{split}
\end{align}
Taking a convex combination of \eqref{proof:coe1} and \eqref{proof:coe2} with $\delta\in[0,1]$ gives
\begin{align*}
\langle E''(u)v,v\rangle &\ge (1-\delta)\lambda_2\|v\|_{L^2}^2 + \tfrac{\delta}{2} \|v\|_{H_\kappa^1}^2 - \delta (\tfrac{3}{2} + \|\A\|_{L^\infty}^2 ) \|v\|_{L^2}^2 \\
&= \left( \lambda_2 -\delta (\lambda_2 + \tfrac32 + \|\A\|_{L^\infty}^2 ) \right) \|v\|_{L^2}^2 + \tfrac{\delta}{2}\|v\|_{H_\kappa^1}^2 \quad \text{for all } v\in(\mathrm{i}u)^\bot.
\end{align*}
Choosing $\delta = \lambda_2 (\lambda_2 + \tfrac32 + \| \A \|_{L^\infty}^2 )^{-1}$ yields
\begin{align*}
\langle E''(u)v,v\rangle
\ge
\frac{\lambda_2}
{2\left(\lambda_2+\frac32+\|\A\|_{L^\infty}^2\right)}
\|v\|_{H_\kappa^1}^2
\quad \text{for all } v\in(\mathrm{i}u)^\bot.
\end{align*}
This proves the claim.
\end{proof}

As we will review in the next section, the coercivity constant $\rho(\kappa)^{-1}$ plays a central role in the numerical approximation of GL minimizers, due to a pollution effect that scales with $\rho(\kappa)$. Intuitively, $\rho(\kappa)$ reflects the curvature of the energy functional at the minimizers, and therefore influences not only the approximation properties of the minimizers but also typically the convergence rates of iterative solvers when computing minimizers in appropriate approximation spaces. \\
In the proof of Lemma \ref{lem:coe} we have seen that the coercivity constant $\rho(\kappa)^{-1}$ satisfies
\begin{align*}
	\rho(\kappa)^{-1} \lesssim \lambda_2(\kappa)
\end{align*}
where $\lambda_2(\kappa)$ is the second smallest eigenvalue of $E''(u)$. Since the smallest eigenvalue is $\lambda_1 = 0$, we can identify $\lambda_2(\kappa)$ with the spectral gap of $E''(u)$. Unfortunately, to the best of our knowledge, there are no analytic results about the dependence of the constant $\rho(\kappa)$ or the spectral gap $\lambda_2(\kappa)$ w.r.t. $\kappa$. However, from numerical observation it is conjectured that $\rho(\kappa) \sim \kappa^\alpha$ for some $\alpha \ge 1$. We formulate this in the following conjecture that we aim to verify numerically in Section \ref{sec:numerics}.

\begin{conjecture} \label{conjecture}
Under the general assumptions, there exists $\alpha \ge 1$ such that for all $\kappa \ge 1$
\begin{align*}
	\rho(\kappa)^{-1} \ls \kappa^\alpha
\end{align*}
where $\rho(\kappa)^{-1}$ is the coercivity constant of $E''(u)$ from Lemma \ref{lem:coe}.
\end{conjecture}

We note that the spectral properties of the linear part of $E''(u)$ have been studied extensively e.g. in \cite{ForH10}. The linear part is given by the so-called magnetic Neumann Laplacian $(\tfrac{\i}{\kappa} \nabla + \A)^2$. For bounded smooth domains it is shown in \cite[Theorem 8.1.1 \& 9.1.1]{ForH10} that its smallest eigenvalue $\lambda(\kappa)$ satisfies $\lambda(\kappa) \sim \kappa^{-1}$ if $\curl \A$ does not vanish in $\Omega$. This suggests a decay rate $\alpha \ge 1$ in Conjecture \ref{conjecture}. However, since this concerns only the linear part of $E''(u)$, it does not imply a corresponding bound for the full operator as the nonlinear effects might dominate the spectrum of $E''(u)$.

\section{Computation of GL minimizers} \label{sec:computation}

In this section, we introduce a discretization and an iterative numerical method for approximating minimizers of the GL energy. For the spatial discretization, we employ classical $\mathcal{P}_2$ Lagrange finite elements. Let $\mathcal{T}_h$ be a quasi-uniform, shape-regular triangulation of $\Omega$ with mesh size $h > 0$. We define the space of quadratic, $H^1$-conforming Lagrange finite elements by
\[
V_h := \{ v_h \in C(\overline{\Omega}) : v_h|_{T} \in \mathcal{P}_2(T) \ \text{for all } T \in \mathcal{T}_h \},
\]
where $\mathcal{P}_2(T)$ denotes the space of quadratic polynomials on $T \in \mathcal{T}_h$. A discrete minimizer $u_h \in V_h$ of the GL energy $E$ is then characterized by
\begin{equation} \label{discrete_minimizer}
u_h = \argmin{v_h \in V_h} E(v_h).
\end{equation}
To achieve optimal approximation order of the minimizer $u$ in the $\mathcal{P}_2$ finite element space we need to ensure $H^3$ regularity of $u$. This can be guaranteed if we assume, in addition, that $\mathbf{A} \in (W^{1,\infty}(\Omega))^d$ and that $\partial\Omega$ is of class $C^3$ or that $\Omega$ is a cuboid. Then it is shown in \cite{CFH25} for the smooth boundary and in \cite{DDH24} for the cuboid that the exact minimizer satisfies $u \in H^3(\Omega)$ and $|u|_{H^3} \lesssim \kappa^3$. Standard interpolation estimates (cf.\ \cite{BrennerScott}) then imply for the best-approximation error
\begin{align*}
	\inf_{v_h \in V_h} \| u - v_h \|_{H^1_\kappa}
\lesssim 
\kappa^{-1} h^2 |u|_{H^3} + h^2 |u|_{H^2}
\lesssim 
(\kappa h)^2.
\end{align*}
Consequently, obtaining meaningful approximations in $V_h$ requires at least the resolution condition $h \lesssim \kappa^{-1}$. However, previous works reveal that the situation is more restrictive when considering the error of the discrete minimizer from \eqref{discrete_minimizer}. In \cite[Theorem 3.2]{CFH25}, it is shown that if
\begin{equation} \label{res_cond}
h \lesssim \rho(\kappa)^{-1/2}\kappa^{-1-d/2},
\end{equation}
then the optimal convergence rate
\begin{align*}
\| u - u_h \|_{H^1_\kappa}
\lesssim 
(\kappa h)^2
\end{align*}
is achieved. In particular, there is a pollution effect present that needs to be resolved by the constraint \eqref{res_cond} to achieve optimal order of convergence. Similar results involving resolution conditions that quantify the pollution effect have been derived for general Lagrange finite elements of order $p$ (see \cite{DH24,CFH25}) as well as for multiscale approximation spaces in the spirit of the Localized Orthogonal Decomposition, cf. \cite{BDH25,MP14}. As shown in \cite{DDH24}, the pollution effect is also observed in the order parameter for minimizers of the full GL energy \eqref{full-energy}. \\
Assuming that Conjecture~\ref{conjecture} holds, the GL parameter $\kappa$ has a decisive influence on the mesh size required for accurate approximations. For the quadratic finite element approximation considered here, the resolution condition can be given explicitly in terms of $\kappa$ by
\begin{align*}
	h \lesssim \kappa^{-1-(\alpha + d)/2},
\end{align*}
where $\alpha$ denotes the rate appearing in Conjecture~\ref{conjecture}. Finally, we note that the pollution effect is indeed observed numerically through a pre-asymptotic regime for coarse mesh sizes where no error decay is observed. This gives rise to the high interest of the magnitude of $\alpha$ in order to further quantifying the pollution effect.

\subsection{A gradient descent method}

To verify Conjecture \ref{conjecture} and compute the magnitude of $\alpha$ via the spectrum of the second derivative $E''(u)$, we first need to compute the discrete minimizer satisfying \eqref{discrete_minimizer}. This is typically achieved using iterative descent methods of the form
\begin{equation}
	u^{n+1} = u^n + \tau_n d^n,
\end{equation}
where $u^n$ is the current iterate, $\tau_n > 0$ is a step size, and $d^n$ is the descent direction. In this work, we employ a nonlinear conjugate Sobolev gradient method, initially introduced for the GL problem in \cite{CFH25} and further used in \cite{DoHe25}.  
In this method, the descent direction $d^n$ incorporates the Sobolev gradient $\nabla_{X^n} E(u^n)$, defined via
\begin{equation*}
	(\nabla_{X^n} E(u^n), v)_{X^n} = \langle E'(u^n), v \rangle \quad \text{for all } v \in V_h,
\end{equation*}
for a suitably chosen metric $(\cdot,\cdot)_{X^n}$ on $H^1$. The metric is adaptively updated with respect to the current iterate $u^n$ and is given by
\begin{equation*}
	(v,w)_{X^n} := \Re \int_{\Omega} \Big( \frac{\i}{\kappa} \nabla v + \A v \Big) \overline{\Big( \frac{\i}{\kappa} \nabla w + \A w \Big)} + (|u^n|^2 + |\A|^2) v \overline{w} \, \mathrm{d}x, \quad v,w \in V_h.
\end{equation*}
The Sobolev gradient can then be computed as
\begin{equation*}
	\nabla_{X^n} E(u^n) = u^n - \mathcal{A}_n^{-1} \big( (1 + |\A|^2) u^n \big),
\end{equation*}
where $\mathcal{A}_n^{-1} v \in V_h$ is the solution to
\begin{equation*}
	(\mathcal{A}_n^{-1} v, w)_{X^n} = (v, w)_X \quad \forall w \in V_h.
\end{equation*}
Then, the descent direction for the nonlinear conjugate Sobolev gradient method is defined by
\begin{equation*}
	d^n = - \nabla_{X^n} E(u^n) + \beta_n d^{n-1},
\end{equation*}
where $\beta_n$ is the Polak--Ribi\`ere parameter \cite{PR69},
\begin{equation} \label{parameter}
	\beta_n := \max \Big\{0, 
    \frac{(\nabla_{X^n} E(u^n), \nabla_{X^n} E(u^n) -\nabla_{X^{n-1}} E(u^{n-1}))_{X^n}}
         {(\nabla_{X^{n-1}} E(u^{n-1}), \nabla_{X^{n-1}} E(u^{n-1}))_{X^{n-1}}} 
    \Big\}
\end{equation} 
and where we formally set $d^0 = 0$. The step size $\tau_n > 0$ is obtained through minimizing the energy along the descent direction, i.e.,
\begin{align} \label{stepmin}
	\tau_n = \argmin{\tau > 0} E(u^n + \tau d^n).
\end{align}
The iteration is continued until a given tolerance $\mathrm{tol} > 0$ is achieved with respect to the energy difference between successive iterates. This procedure leads to the following algorithm.  

\begin{algorithm} \label{algo}
Given an initial value $u^0 \in V_h$, a tolerance $\mathrm{tol} > 0$, and set $d^0 = 0$. Then perform the following steps for $n \in \mathbb{N}$ until termination:

\begin{enumerate}
\item \textbf{Solve} for $\delta^n \in V_h$ such that $
	(\delta^n, w_h)_{X^n} = ((1 + |\A|^2) u^n, w_h)$ for all $w_h \in V_h$.
\item \textbf{Set} the descent direction	$d^n = \delta^n - u^n + \beta_n d^{n-1}$, where $\beta_n$ is defined in \eqref{parameter}.
\item \textbf{Compute} the step size $\tau_n$ using a line search according to \eqref{stepmin}.
\item \textbf{Update} the iterate $u^{n+1} = u^n + \tau_n d^n$.
\item \textbf{Terminate} if $E(u^{n}) - E(u^{n+1}) < \mathrm{tol}$.
\end{enumerate}
\end{algorithm}

Due to the construction of the algorithm as a descent method, the iterates are expected to be energy diminishing. Moreover, the algorithm terminates when the classical $L^2$-gradient $E'(u^n)$ at the current iterate becomes sufficiently small. Consequently, the iterate $u^n$ can be regarded as a suitable approximation of a critical point of the energy functional $E$ in $V_h$. In order to verify that this critical point corresponds to at least a local minimum of the energy functional, we compute the spectrum of the second derivative. If the second smallest eigenvalue is smaller in magnitude than a prescribed tolerance $\mathrm{tol}_{\mathrm{EV}}$, we assume that the critical point corresponds to a saddle point. In this case, we perform an additional descent step in the direction of the eigenfunction associated with the second smallest eigenvalue. This perturbs the system and the algorithm is restarted from the perturbed iterate. The procedure is repeated until the obtained critical point satisfies our criterion for a local minimum, namely that the spectral gap exceeds the tolerance $\mathrm{tol}_{\mathrm{EV}}$. \\
We emphasize that a rigorous convergence analysis of the nonlinear conjugate Sobolev gradient descent method is an open problem and remains an interesting topic for future research. In particular, it is of interest to understand how the parameter $\kappa$ influences the convergence rate toward a critical point. As typical for such iterative methods, it is expected that the coercivity constant $\rho(\kappa)^{-1}$ plays a significant role, as it provides, loosely speaking, a measure of the curvature of the energy at the critical point. Numerical experiments indicate that, with increasing $\kappa$, the number of iterations required to reach the prescribed tolerance grows significantly. This provides an additional motivation to study the dependence of $\rho(\kappa)^{-1}$ on $\kappa$.

\section{Numerical results} \label{sec:numerics}

In this section we compute GL minimizers numerically in several model settings and estimate the coercivity constant $\rho(\kappa)^{-1}$ via the second smallest eigenvalue $\lambda_2(\kappa)$ of the second Fr\'echet derivative $E''(u)$. This allows us to verify Conjecture~\ref{conjecture} and to estimate the resulting decay rate $\alpha$.

For the model problems we fix the domain $\Omega = (0,1)^2$ and consider two different vector potentials $\A$: The first vector potential $\A_1$ is defined by
\begin{align*}
\A_1(x) =
\sqrt{2}
\begin{pmatrix}
\sin(\pi x_1)\cos(\pi x_2) \\
-\cos(\pi x_1)\sin(\pi x_2)
\end{pmatrix}.
\end{align*}
This vector potential belongs to the model case of an external magnetic field given by 
\begin{align*}
	\H_1(x) = 2\sqrt{2}\pi \sin(2\pi x_1) \sin(2\pi x_2)
\end{align*}
as it solves $\curl \A_1 = \H_1$, $\div \A_1 = 0$ in $\Omega$ in alignment with reduction procedure of the full GL energy \eqref{full-energy} to the reduced energy \eqref{energy}. In a similar fashion, we choose a constant homogeneous magnetic field $\H_2(x) = 2\sqrt{2}\pi $ for the second vector potential $\A=\A_2$ and define it as the solution of
\begin{align*}
\curl \A_2 = \H_2,
\qquad
\div \A_2 = 0 \qquad \text{in } \Omega.
\end{align*}
Note that, in practice, $\A_2$ is computed with a $\mathcal{P}_2$ finite element approximation on $\mathcal{T}_h$. For each model problem defined by one of the vector potentials we vary the GL parameter according to $\kappa = 5j$, $j=2,\dots,20$, so that $\kappa$ ranges from $10$ to $100$ in increments of size $5$. For each parameter configuration we employ $\mathcal{P}_2$ finite elements on a uniform triangular mesh with mesh size $h=2^{-9}$ leading to $1050625$ degrees of freedom (over $\C$). On $V_h$ we compute the discrete GL energy minimizers using the nonlinear conjugate Sobolev gradient descent method from Algorithm~\ref{algo} up to a tolerance of $\mathrm{tol}=10^{-14}$. To ensure that the obtained solutions correspond to local minimizers, we apply the test for a local minimum as described after Algorithm~\ref{algo}, using the tolerance $\mathrm{tol}_{\mathrm{EV}}=10^{-8}$.

Since this procedure only guarantees the computation of local minimizers and the nonlinear conjugate Sobolev gradient method is typically sensitive to the choice of initial values, we run the algorithm for each parameter configuration with five different initial states. These are generated from empirically chosen functions (cf.~\cite{DoHe25}),
\begin{alignat*}{2}
	\psi_1(x,y) &= \tfrac{1}{\sqrt{2}}(1+\mathrm{i}) e^{-(x^2+y^2)}, \quad & \psi_2(x,y) = \tfrac{1}{\sqrt{\pi}}\!\left(\tfrac{2}{3}(x+\mathrm{i}y)+\tfrac{1}{2}\right) e^{-(x^2+y^2)}, \\
	\psi_3(x,y) &= \tfrac{1}{\sqrt{2}}(1+\mathrm{i}) e^{10\mathrm{i}(x^2+y^2)}, \quad & \psi_4(x,y) = \tfrac{1}{\sqrt{2}}(1+\mathrm{i})(x+\mathrm{i}y) e^{10\mathrm{i}(x^2+y^2)}, \\
	\psi_5(x,y) &= \tfrac{1}{\sqrt{2}}(1+\mathrm{i}).
\end{alignat*}
The initial values are prescribed by $\varphi_j = \psi_j \circ \chi$, where the transformation $\chi : (0,1)^2 \to (-1,1)^2$ is defined by $\chi(x,y) = (2x-1,\, 2y-1)$. The energy levels of the obtained local minimizers are then compared, and the local minimizer with the lowest energy is selected. This minimizer serves as a reference global minimizer, and we compute the spectral gap $\lambda_2(\kappa)$ to validate Conjecture~\ref{conjecture}. We note that the present numerical study is restricted to variations of the GL parameter $\kappa$ and the vector potential $\A$ within the considered model settings. Whether the observed decay behavior of the spectral gap persists for more general domain geometries or other model settings is beyond the scope of the present work and remains an open question for future numerical and analytical investigations. \\

\begin{figure}[h!]
\centering
\begin{minipage}{0.19\textwidth}
\includegraphics[scale=0.17]{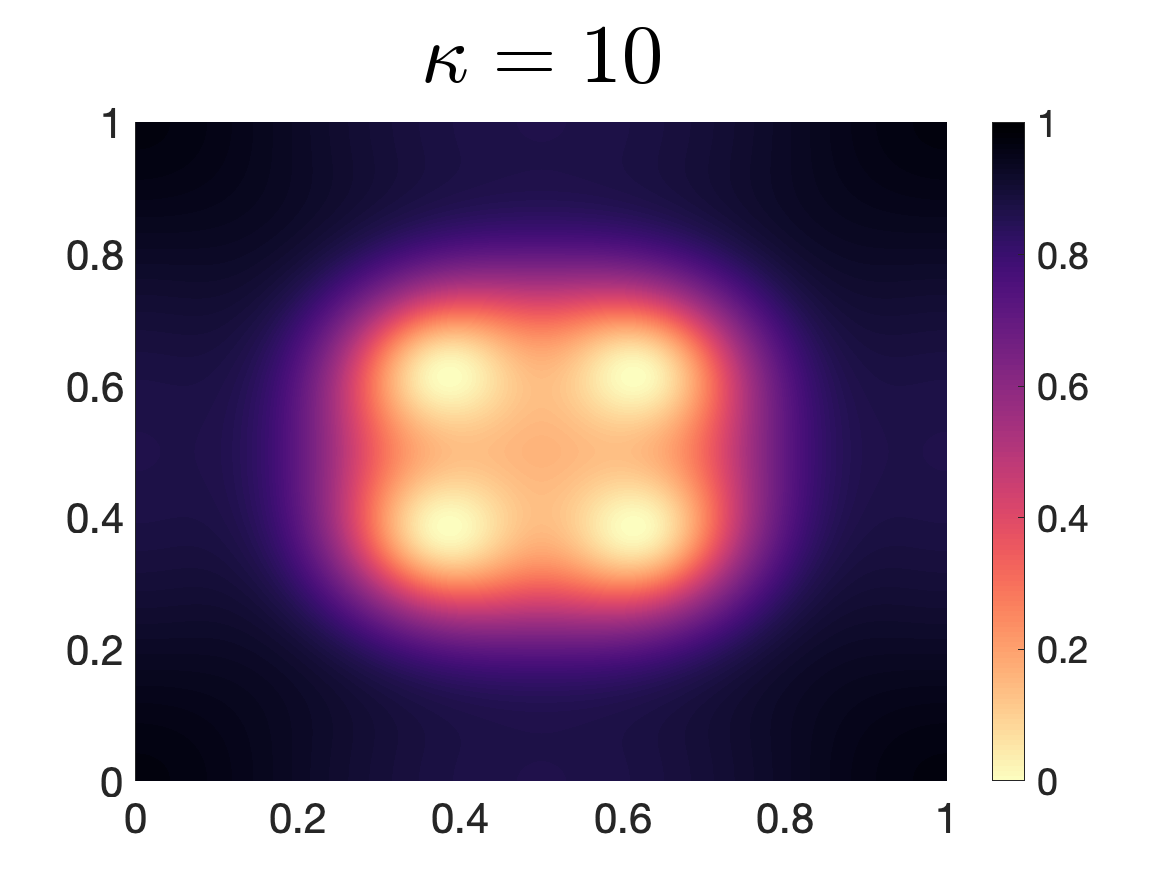}
\end{minipage}
\begin{minipage}{0.19\textwidth}
\includegraphics[scale=0.17]{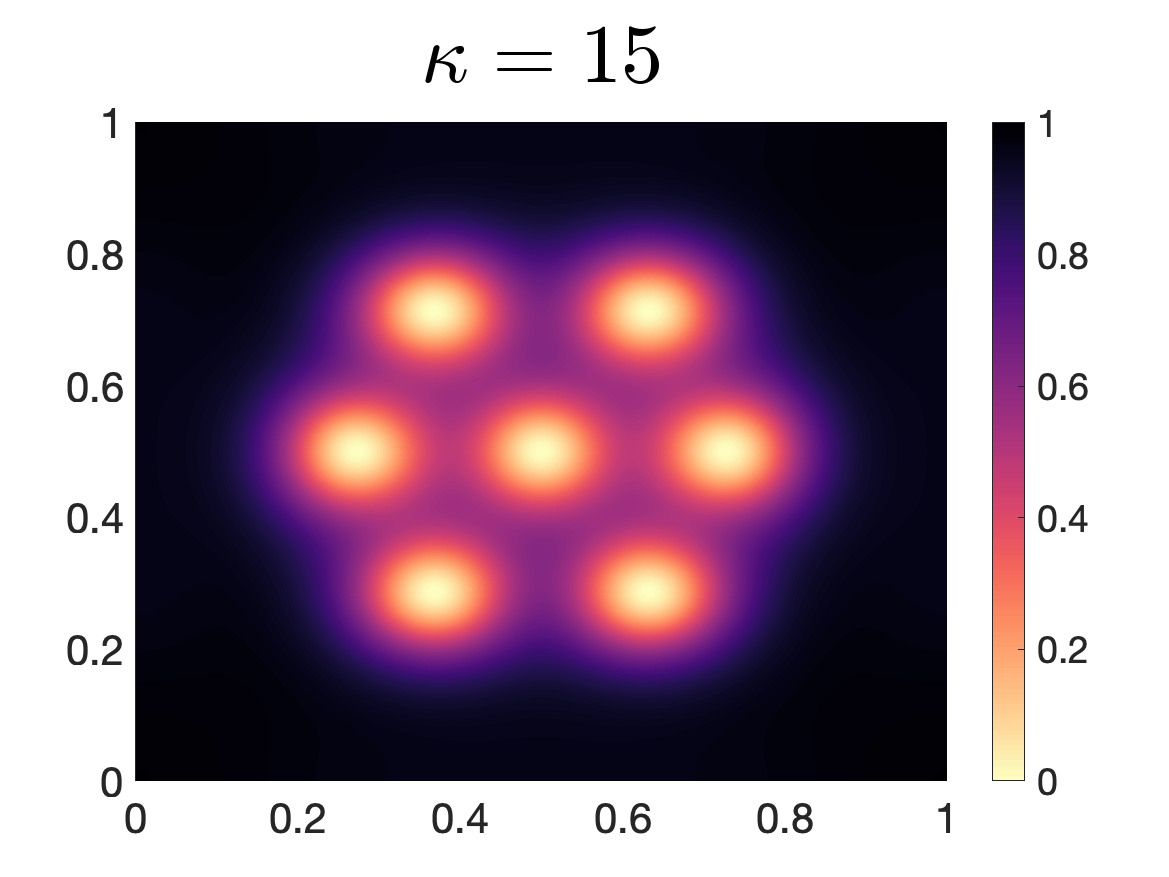}
\end{minipage}
\begin{minipage}{0.19\textwidth}
\includegraphics[scale=0.17]{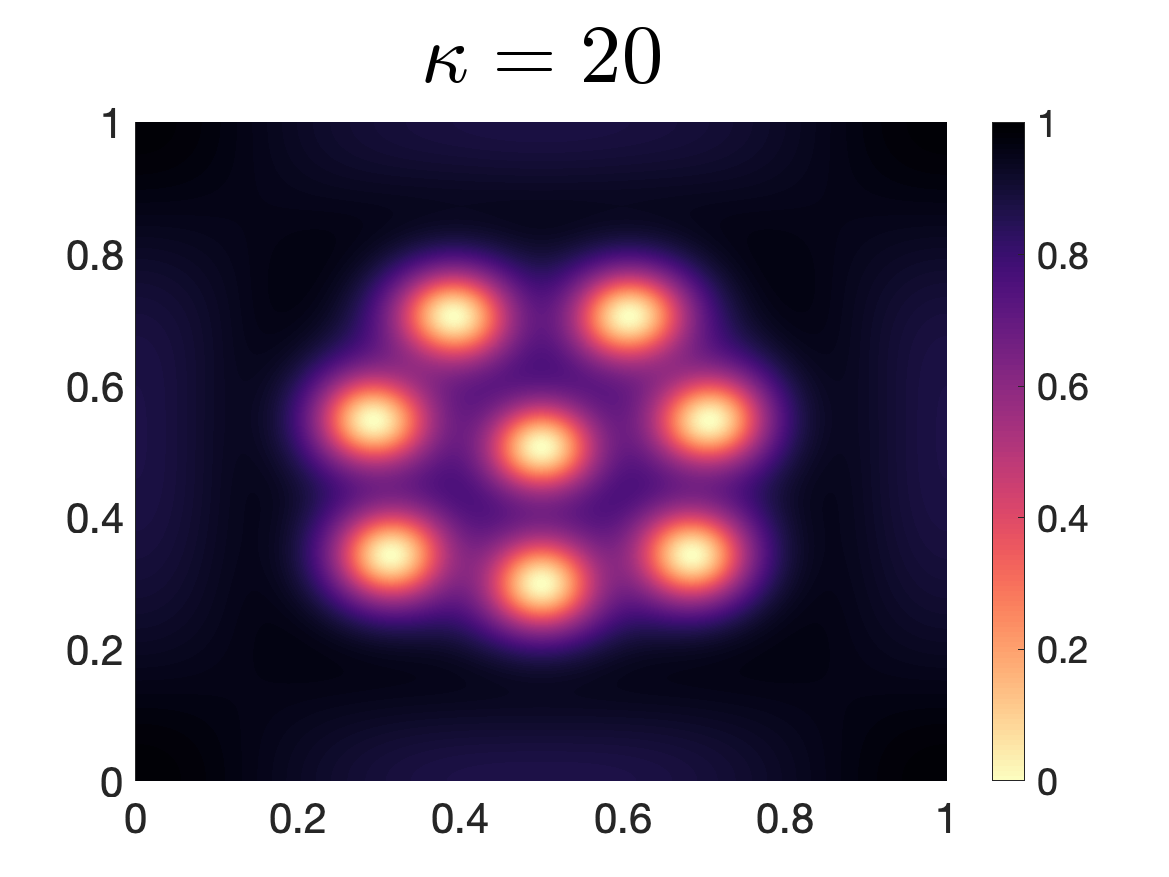}
\end{minipage}
\begin{minipage}{0.19\textwidth}
\includegraphics[scale=0.17]{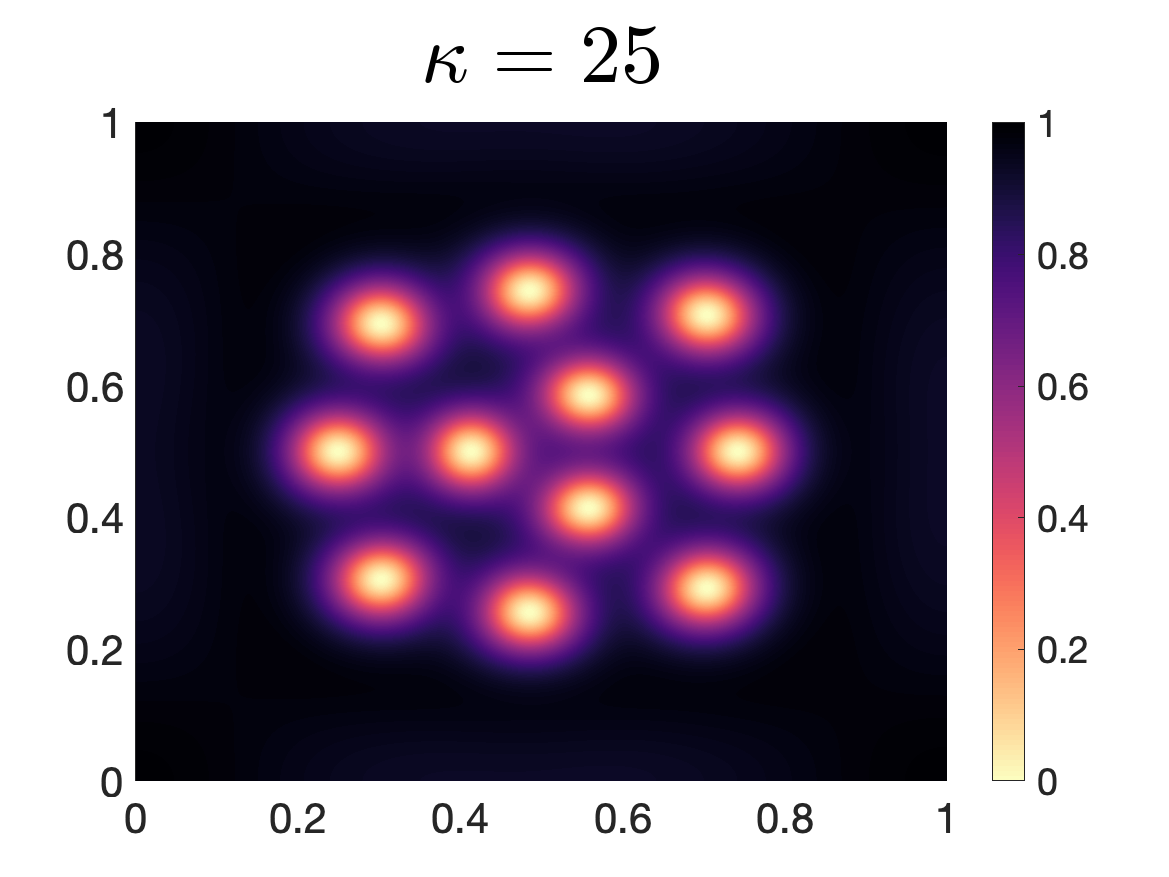}
\end{minipage}
\begin{minipage}{0.19\textwidth}
\includegraphics[scale=0.17]{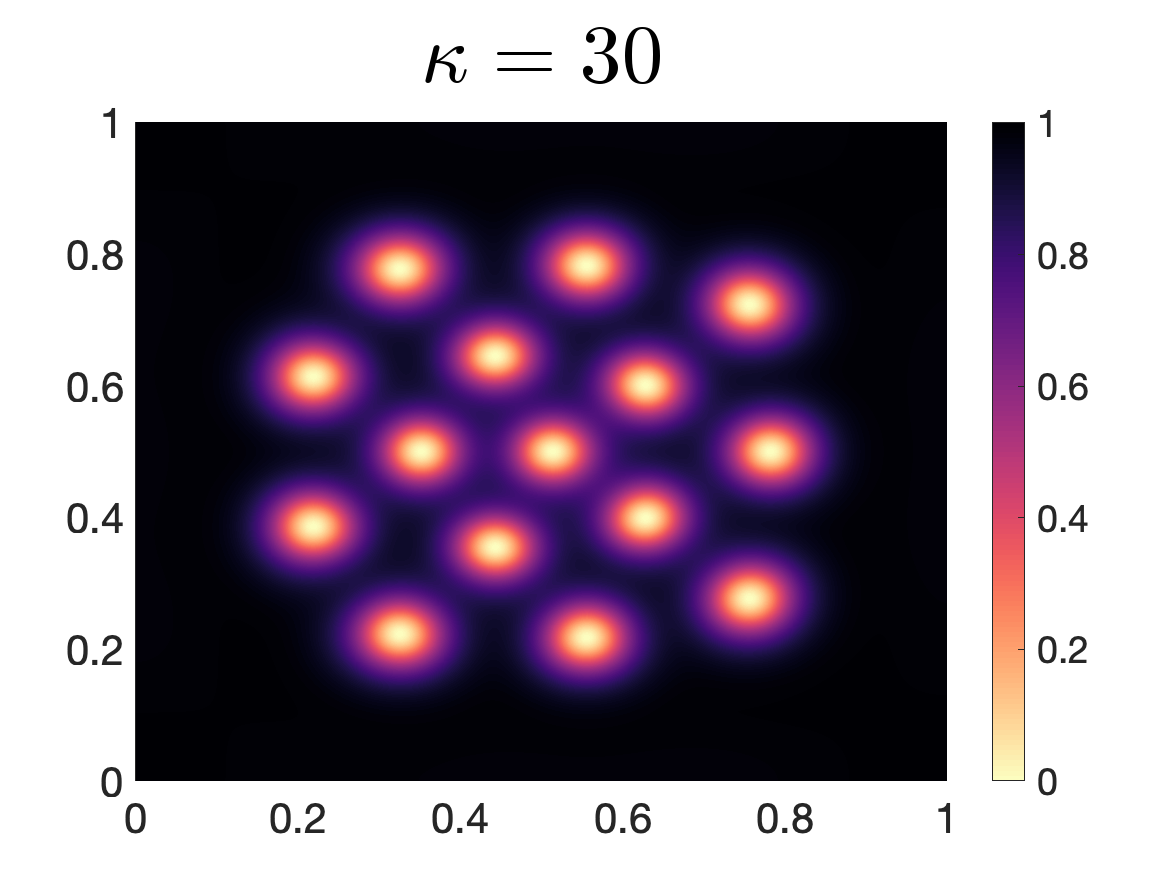}
\end{minipage} \\
\begin{minipage}{0.19\textwidth}
\includegraphics[scale=0.17]{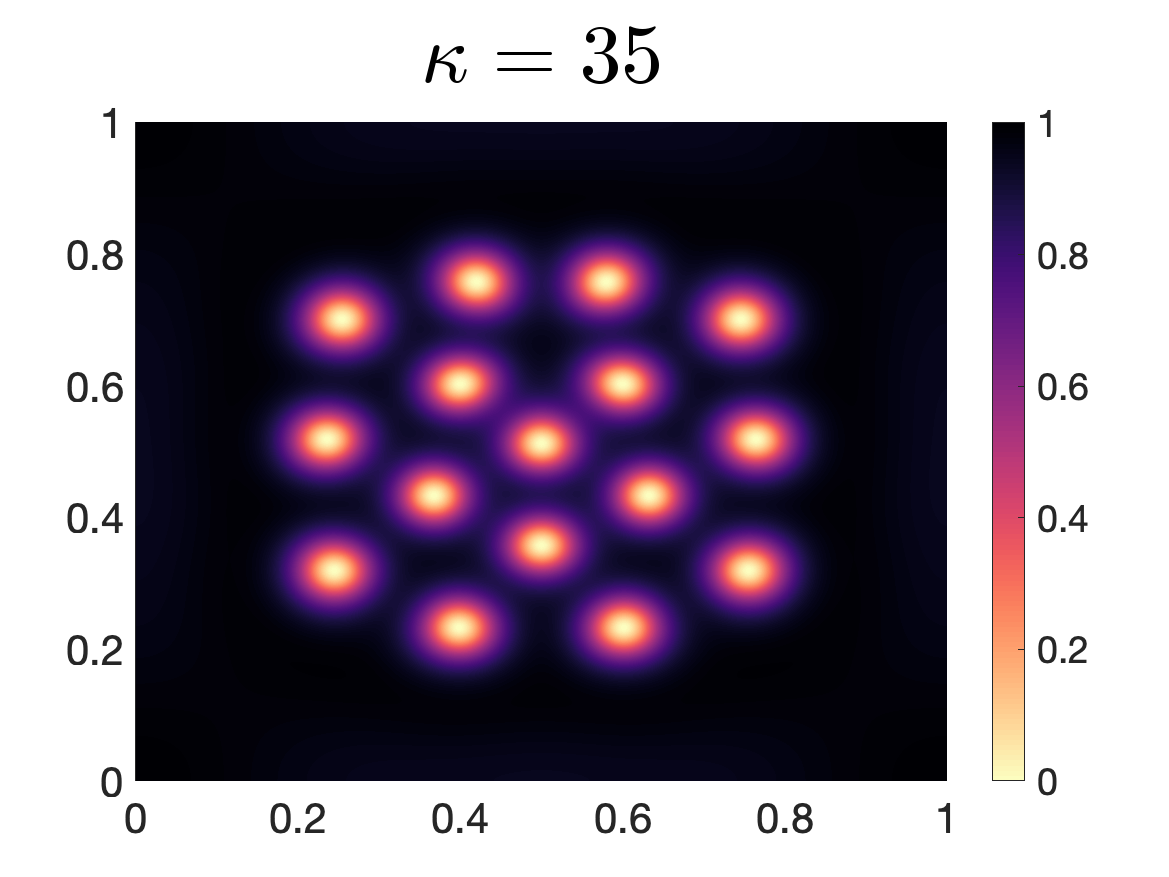}
\end{minipage}
\begin{minipage}{0.19\textwidth}
\includegraphics[scale=0.17]{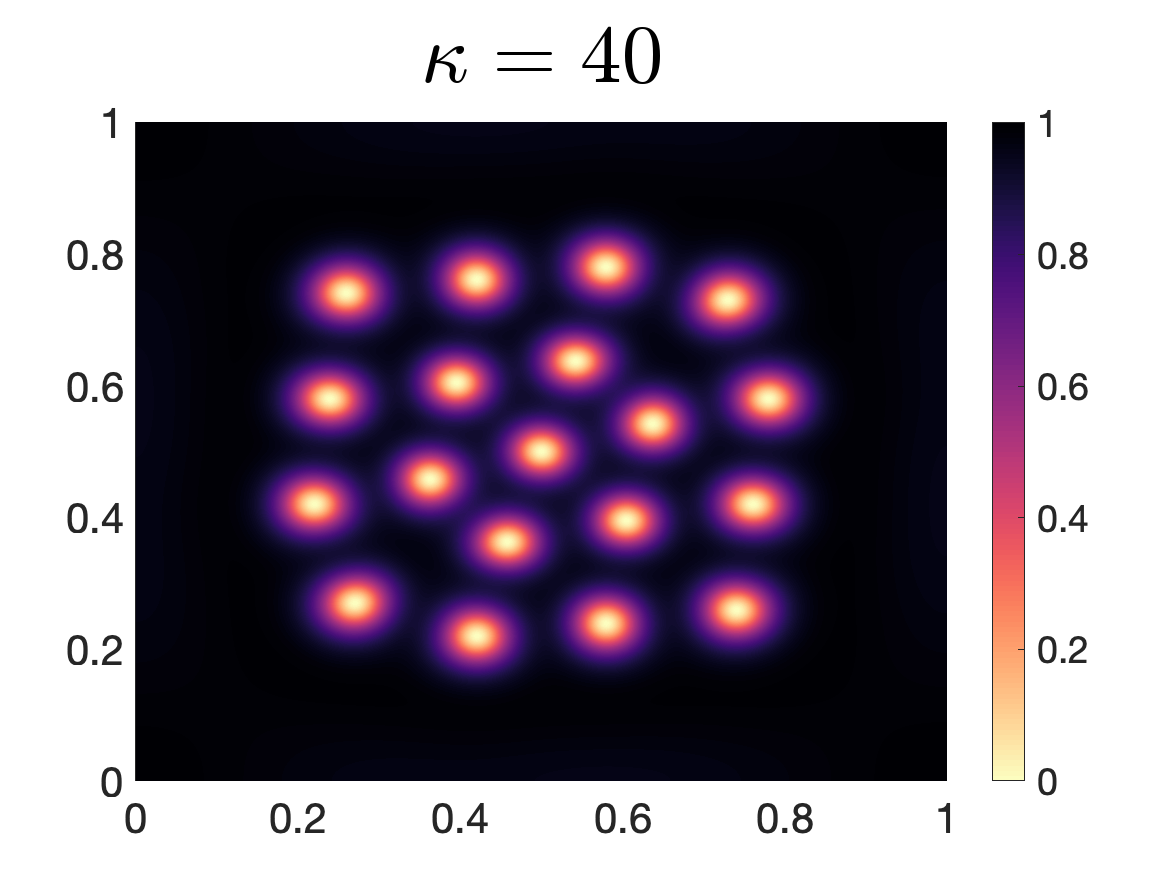}
\end{minipage}
\begin{minipage}{0.19\textwidth}
\includegraphics[scale=0.17]{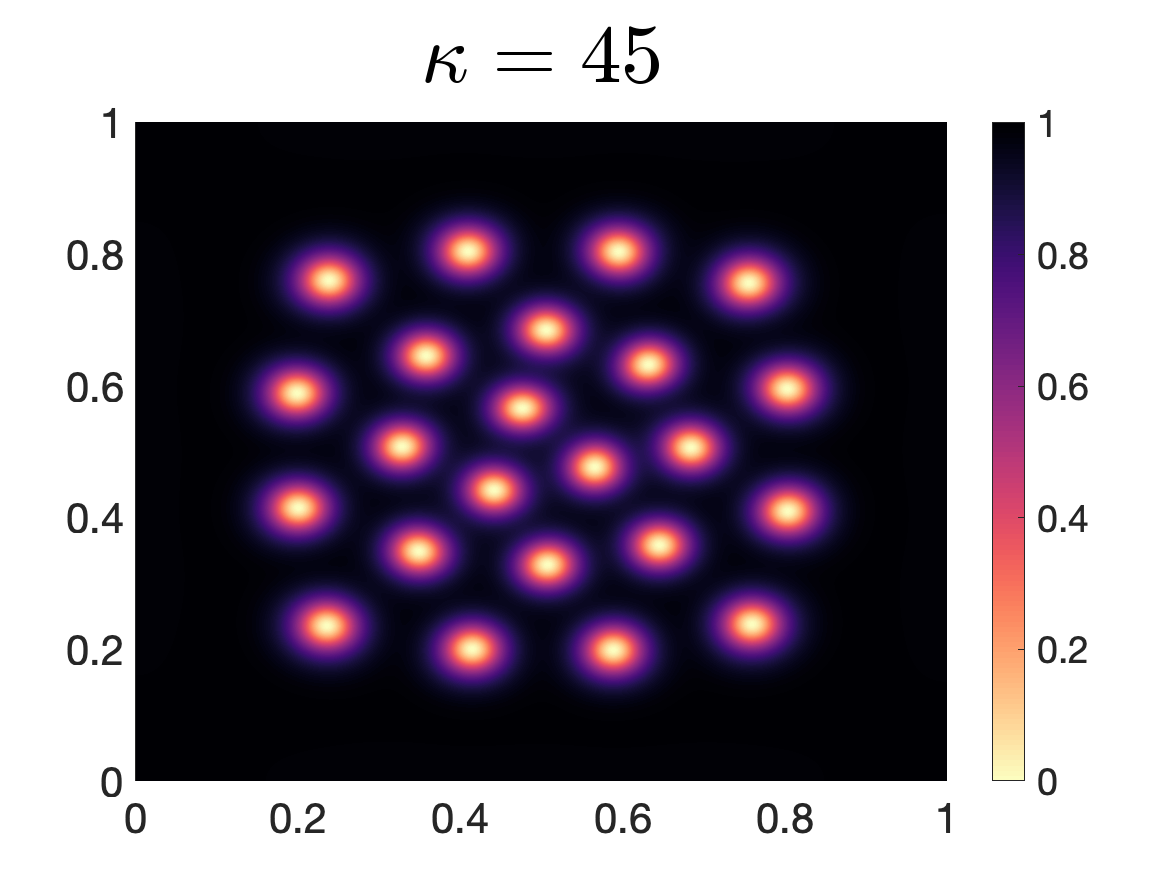}
\end{minipage}
\begin{minipage}{0.19\textwidth}
\includegraphics[scale=0.17]{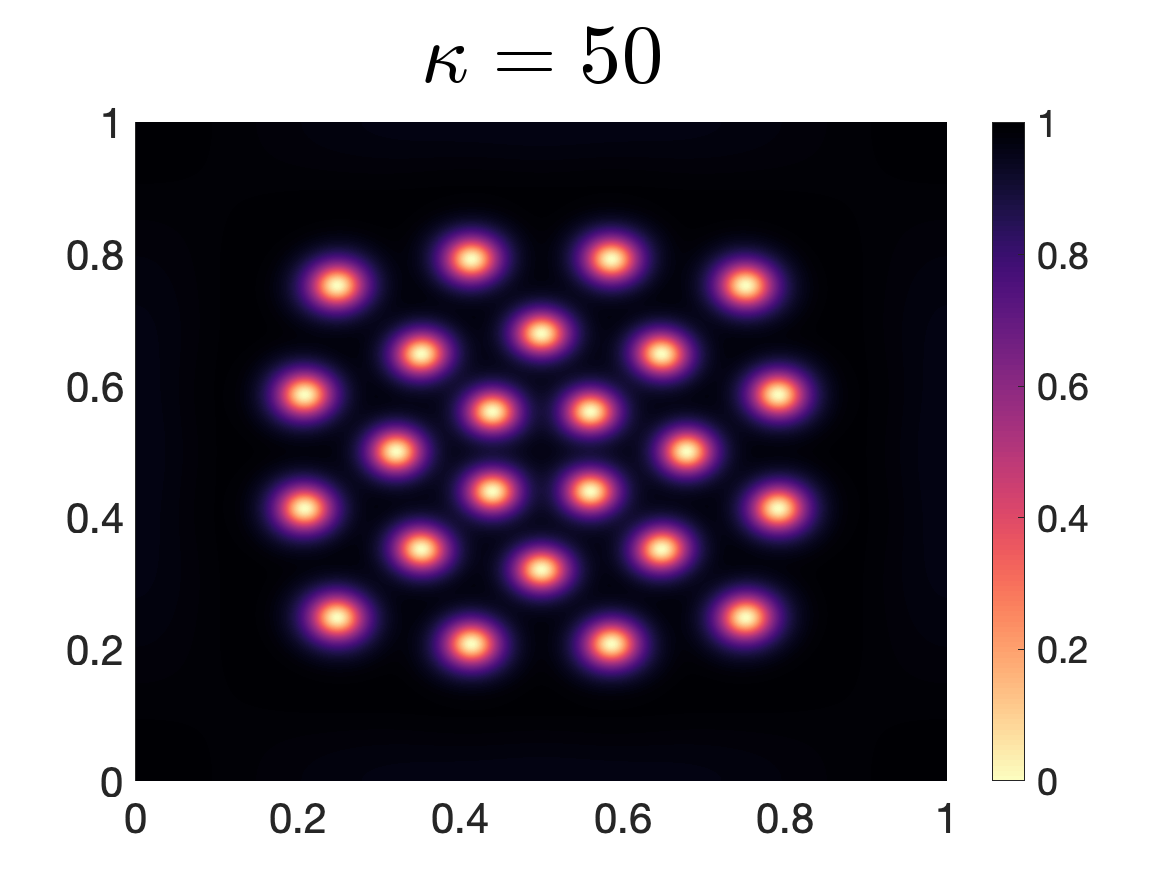}
\end{minipage}
\begin{minipage}{0.19\textwidth}
\includegraphics[scale=0.17]{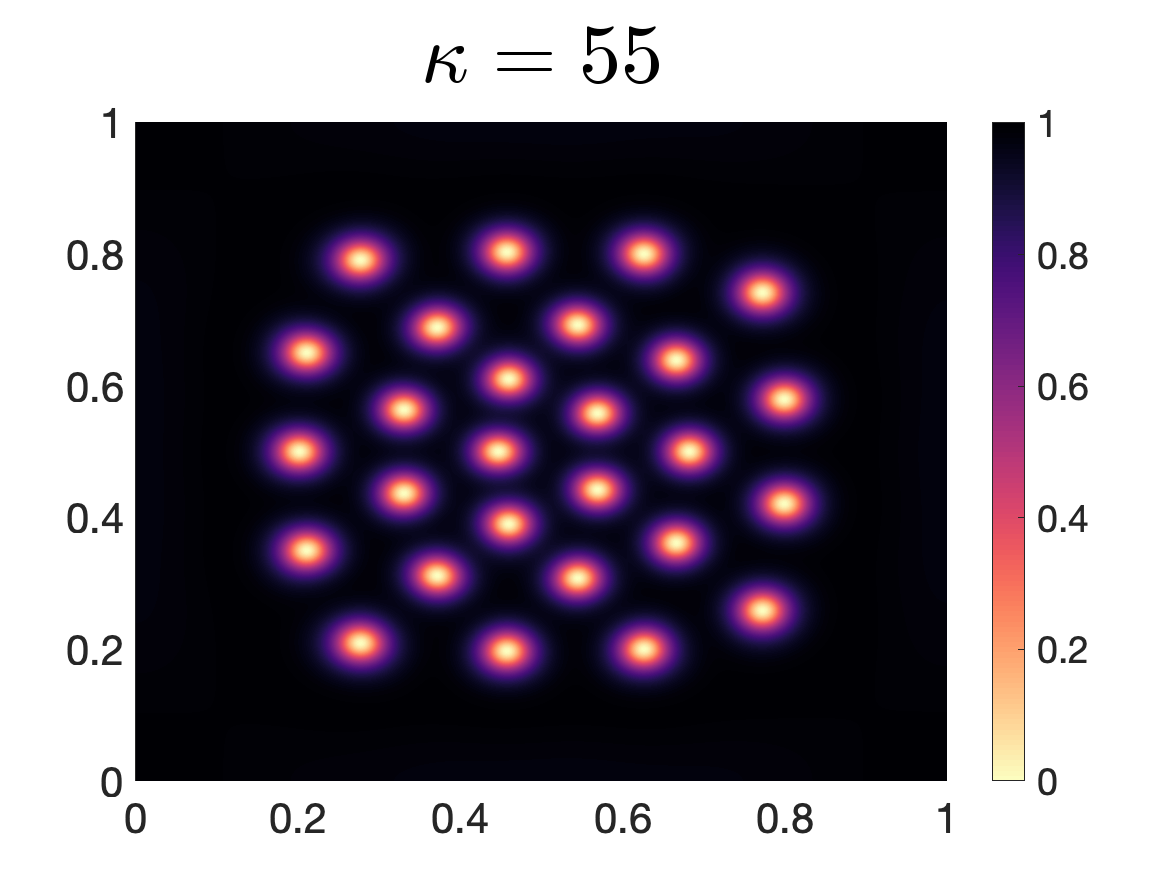}
\end{minipage} \\
\begin{minipage}{0.19\textwidth}
\includegraphics[scale=0.17]{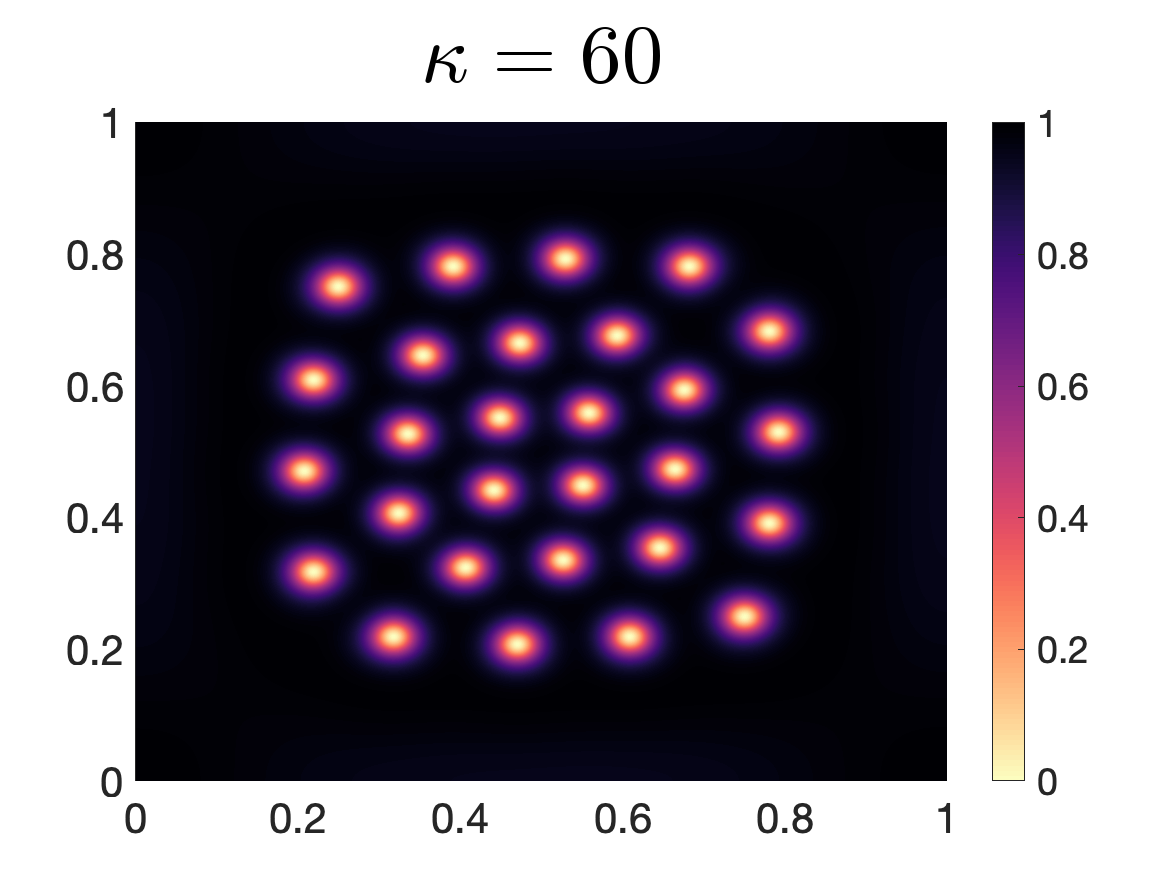}
\end{minipage}
\begin{minipage}{0.19\textwidth}
\includegraphics[scale=0.17]{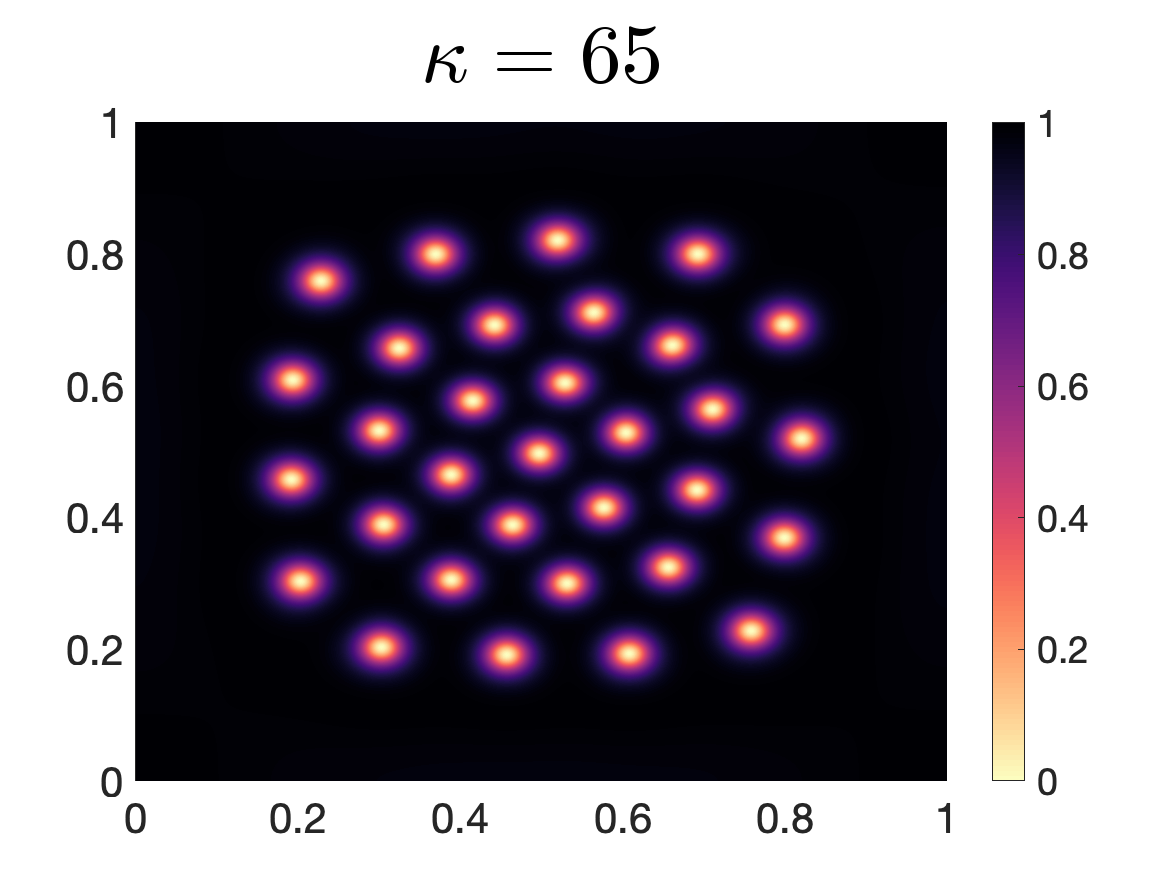}
\end{minipage}
\begin{minipage}{0.19\textwidth}
\includegraphics[scale=0.17]{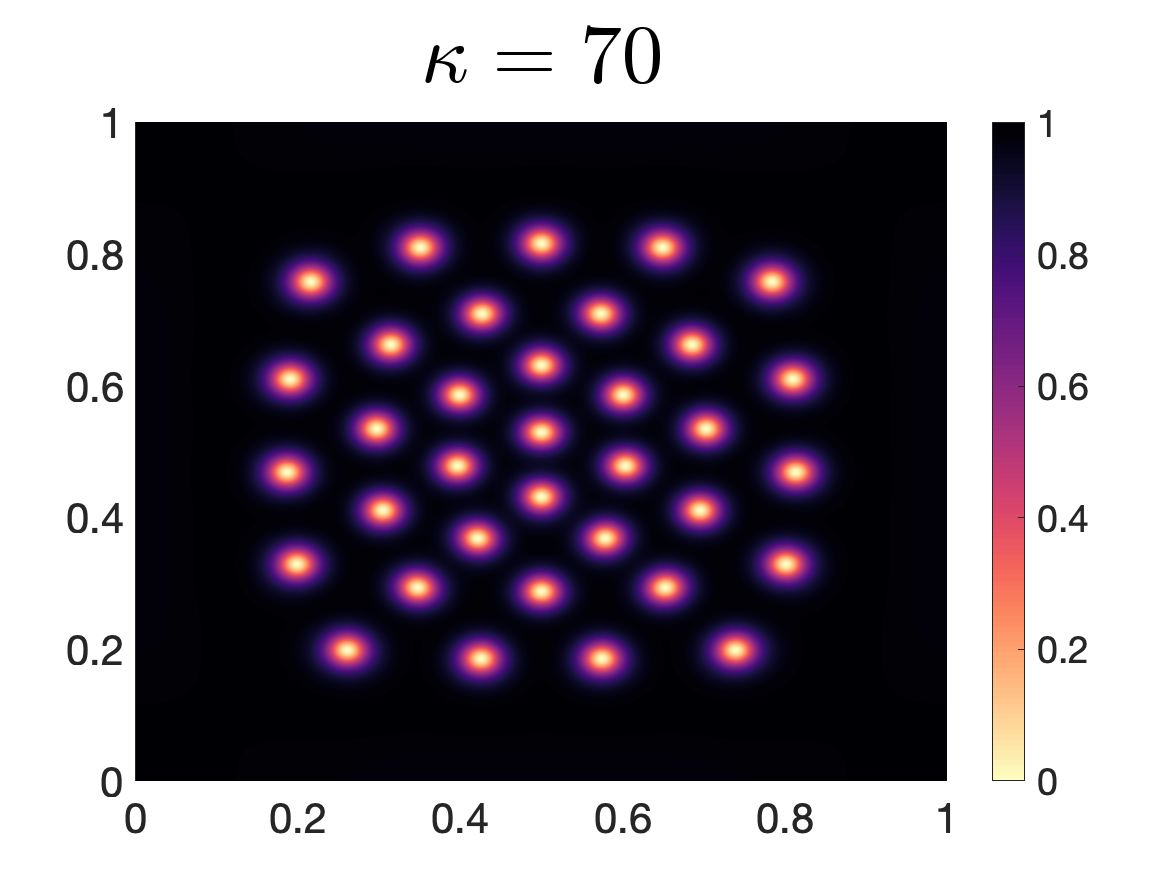}
\end{minipage}
\begin{minipage}{0.19\textwidth}
\includegraphics[scale=0.17]{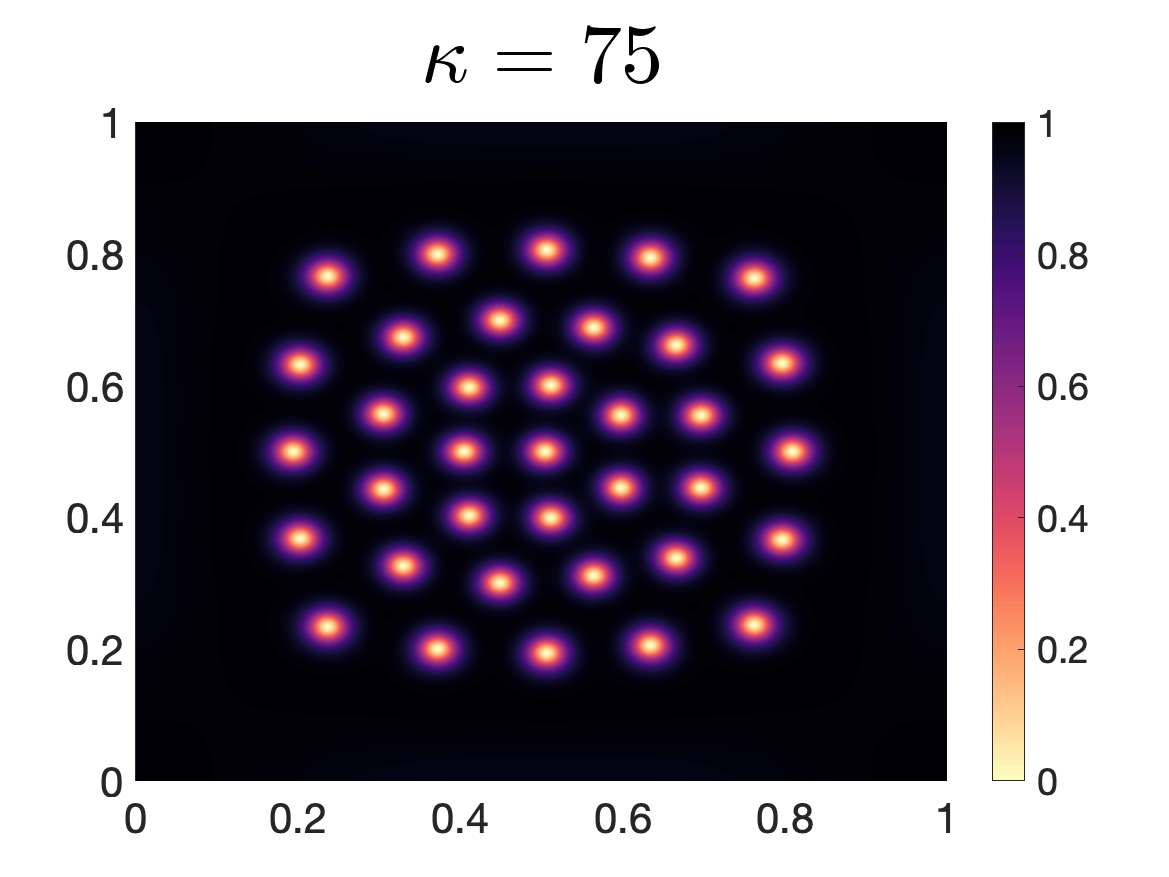}
\end{minipage}
\begin{minipage}{0.19\textwidth}
\includegraphics[scale=0.17]{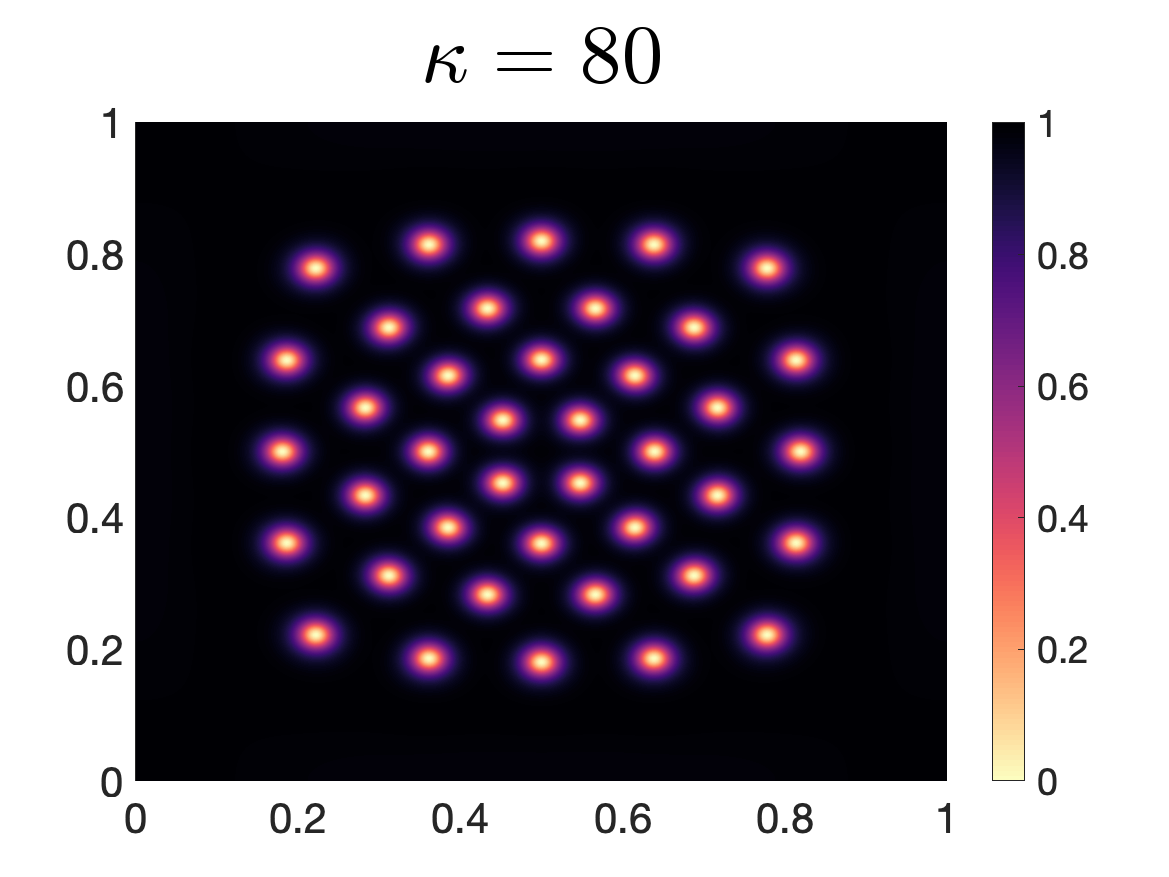}
\end{minipage} \\
\begin{minipage}{0.19\textwidth}
\includegraphics[scale=0.17]{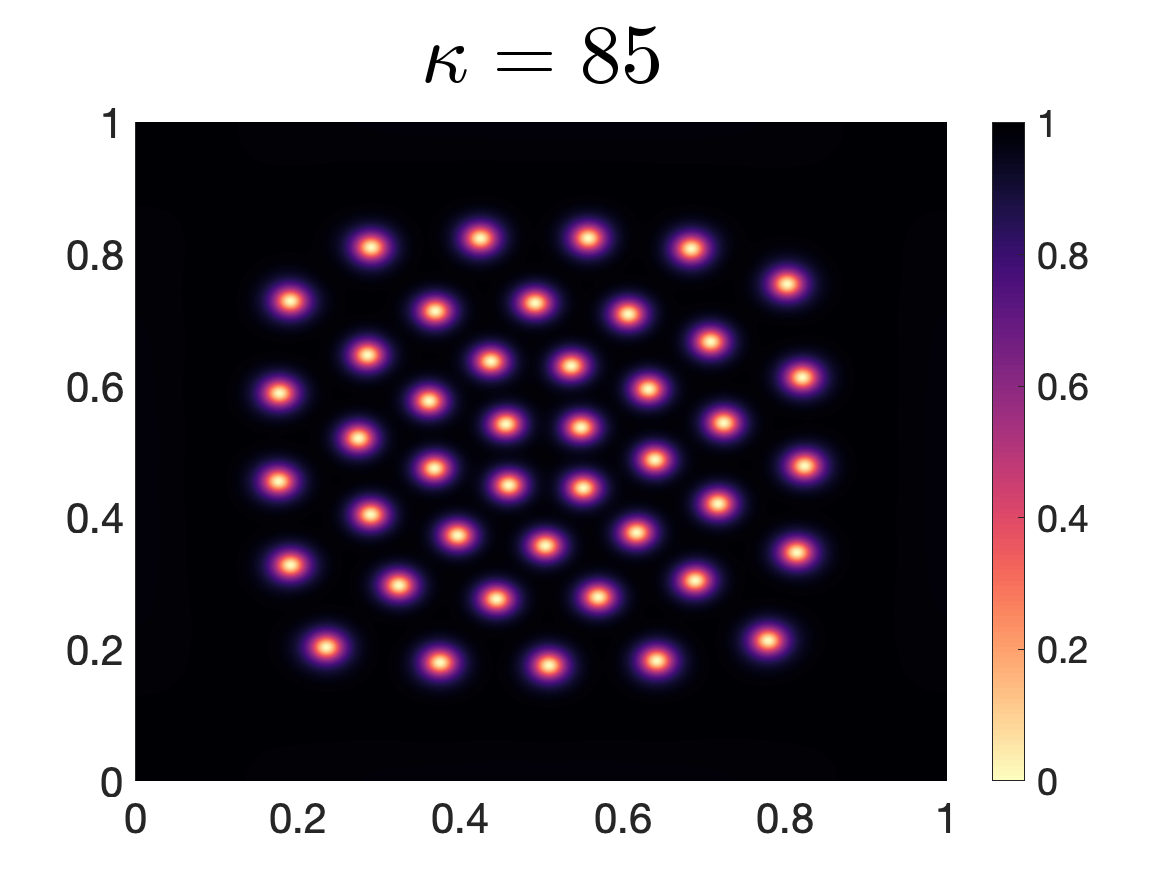}
\end{minipage}
\begin{minipage}{0.19\textwidth}
\includegraphics[scale=0.17]{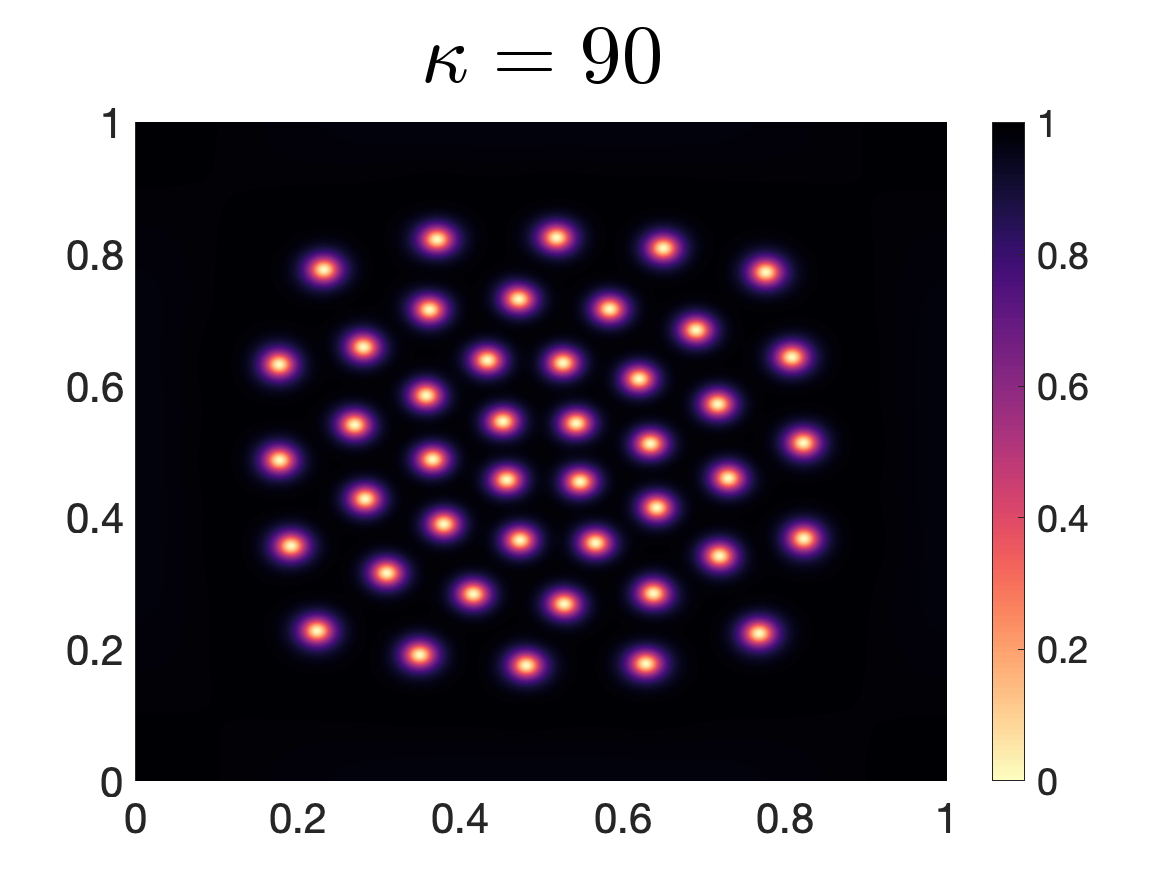}
\end{minipage}
\begin{minipage}{0.19\textwidth}
\includegraphics[scale=0.17]{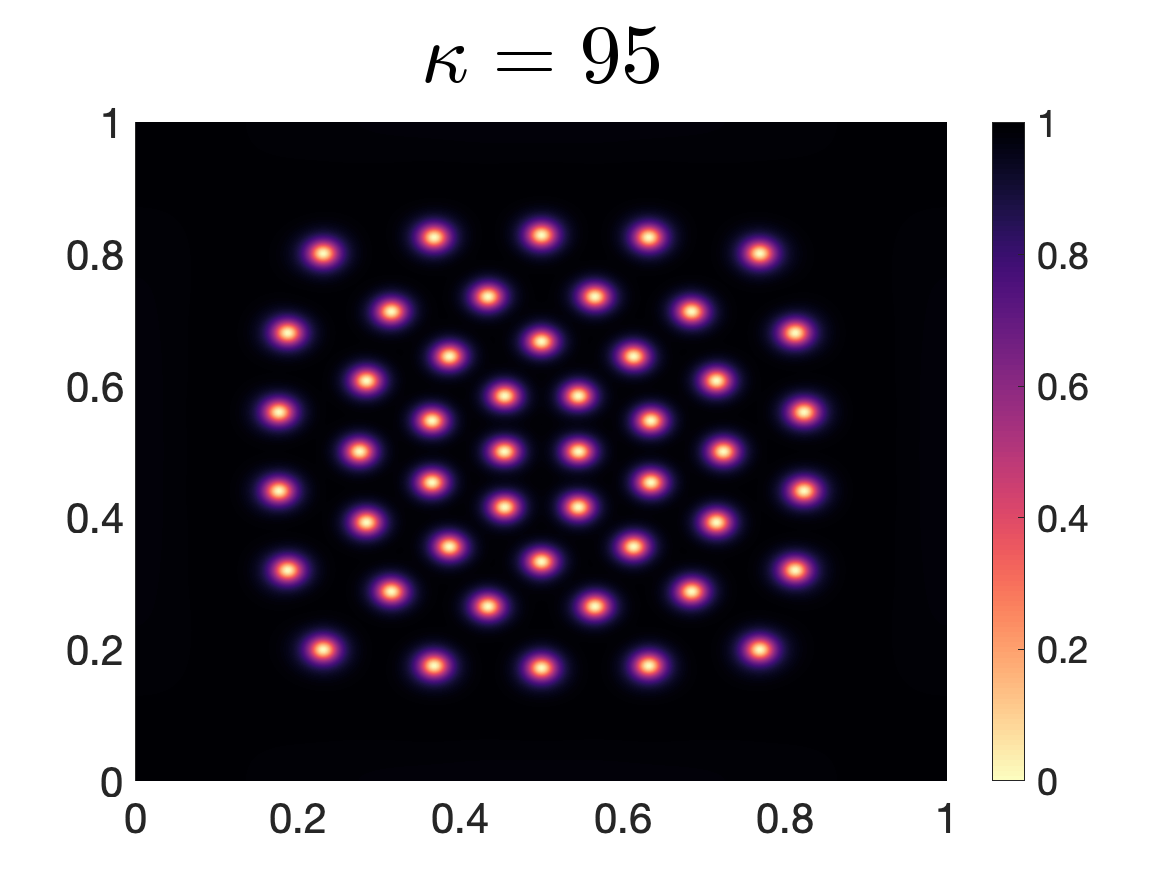}
\end{minipage}
\begin{minipage}{0.19\textwidth}
\includegraphics[scale=0.17]{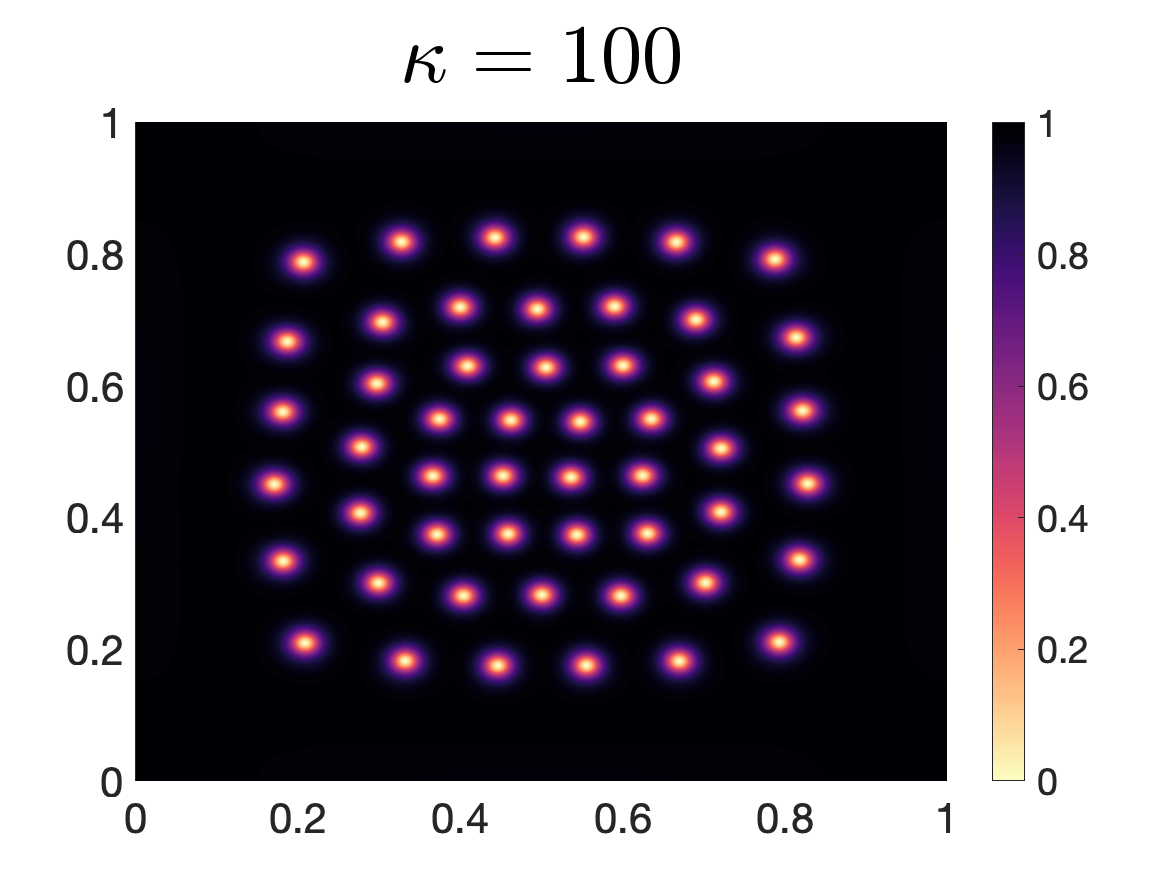}
\end{minipage}
\begin{minipage}{0.19\textwidth}
\phantom{\includegraphics[scale=0.17]{./min_kappa100.eps}}
\end{minipage}
	\caption{Densities $|u|^2$ of the computed discrete GL energy minimizers with $\A = \A_1$ and $\kappa = 5j$, $j =2,\dots,20$.}
	\label{fig:1}
\end{figure} 

\begin{figure}[h!]
\centering
\begin{minipage}{0.19\textwidth}
\includegraphics[scale=0.17]{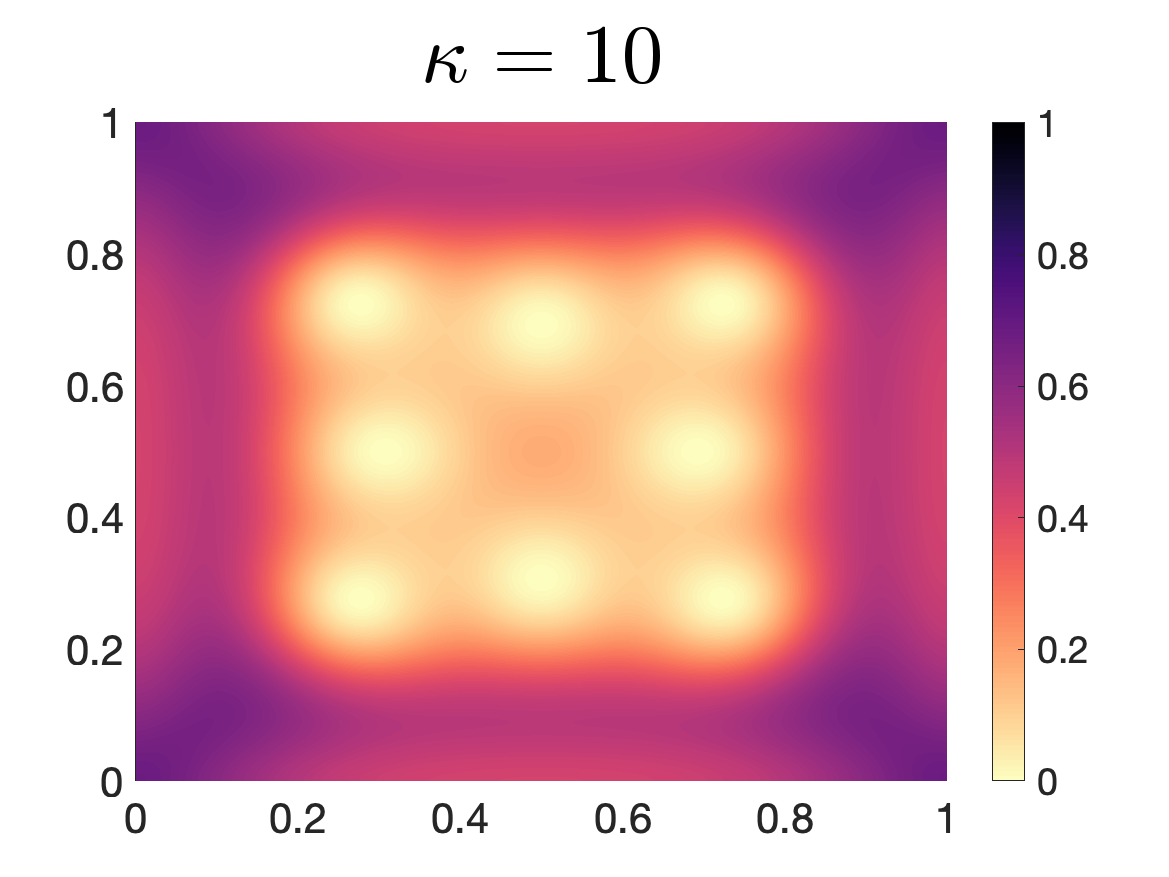}
\end{minipage}
\begin{minipage}{0.19\textwidth}
\includegraphics[scale=0.17]{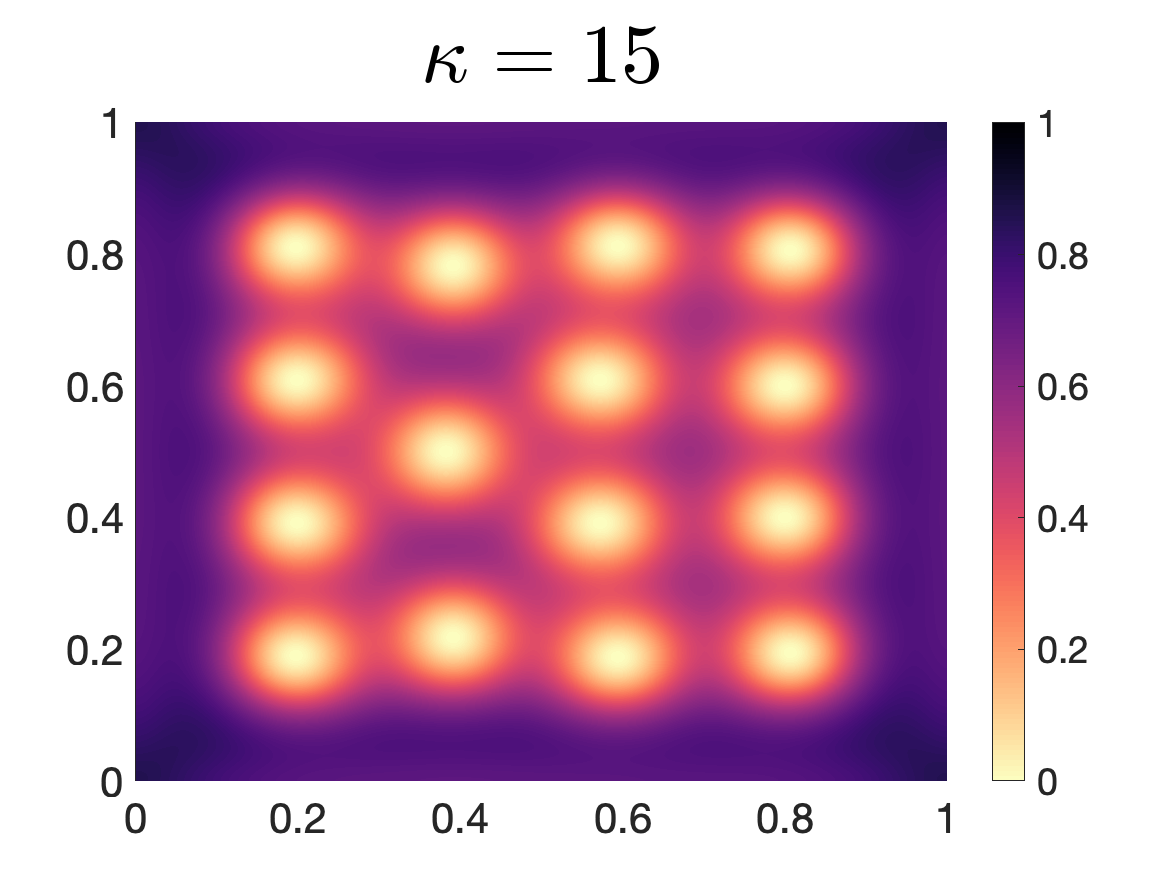}
\end{minipage}
\begin{minipage}{0.19\textwidth}
\includegraphics[scale=0.17]{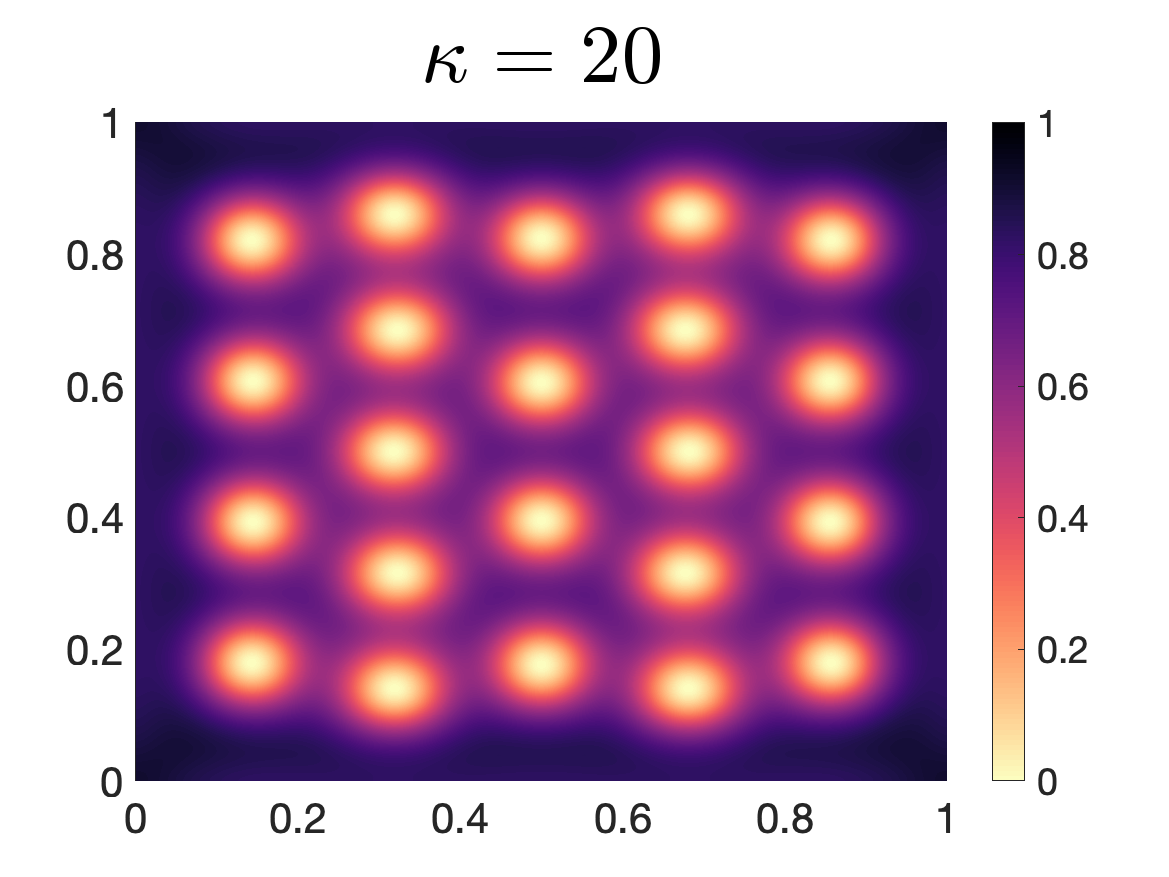}
\end{minipage}
\begin{minipage}{0.19\textwidth}
\includegraphics[scale=0.17]{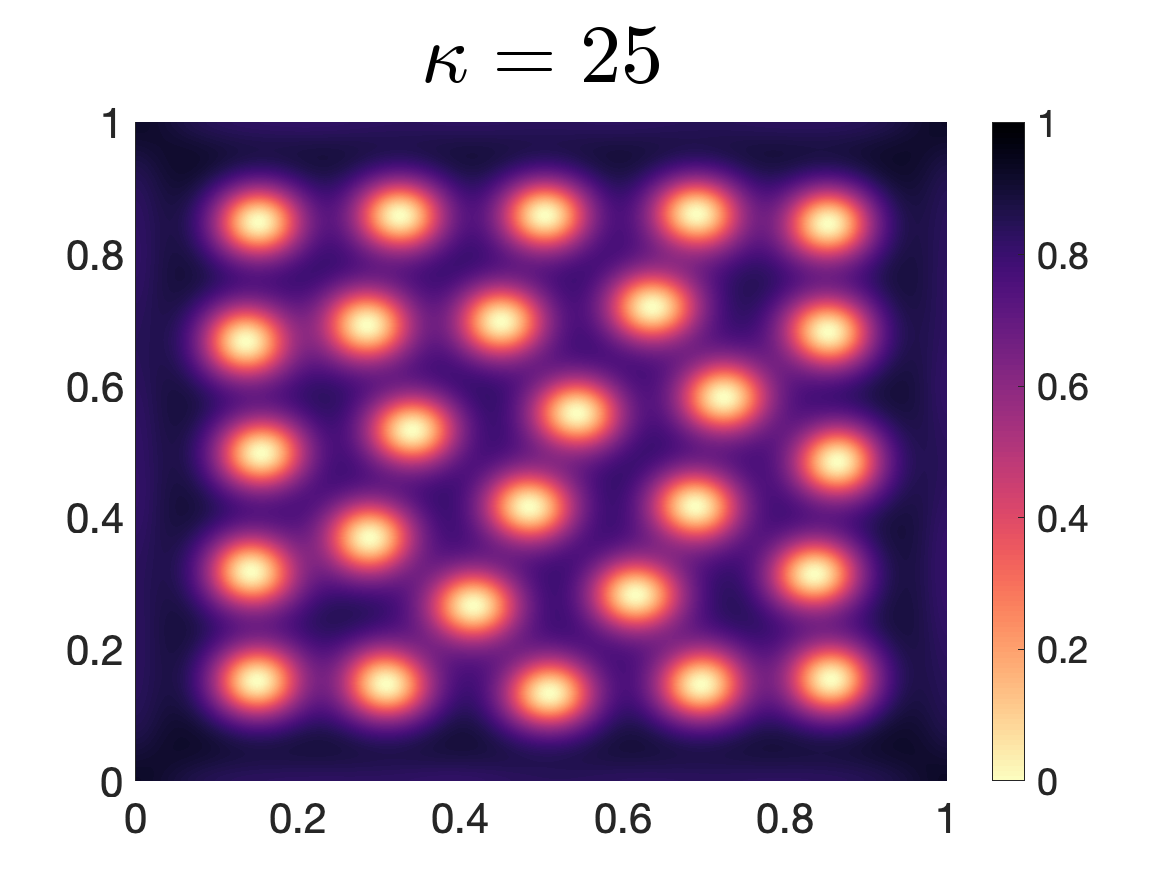}
\end{minipage}
\begin{minipage}{0.19\textwidth}
\includegraphics[scale=0.17]{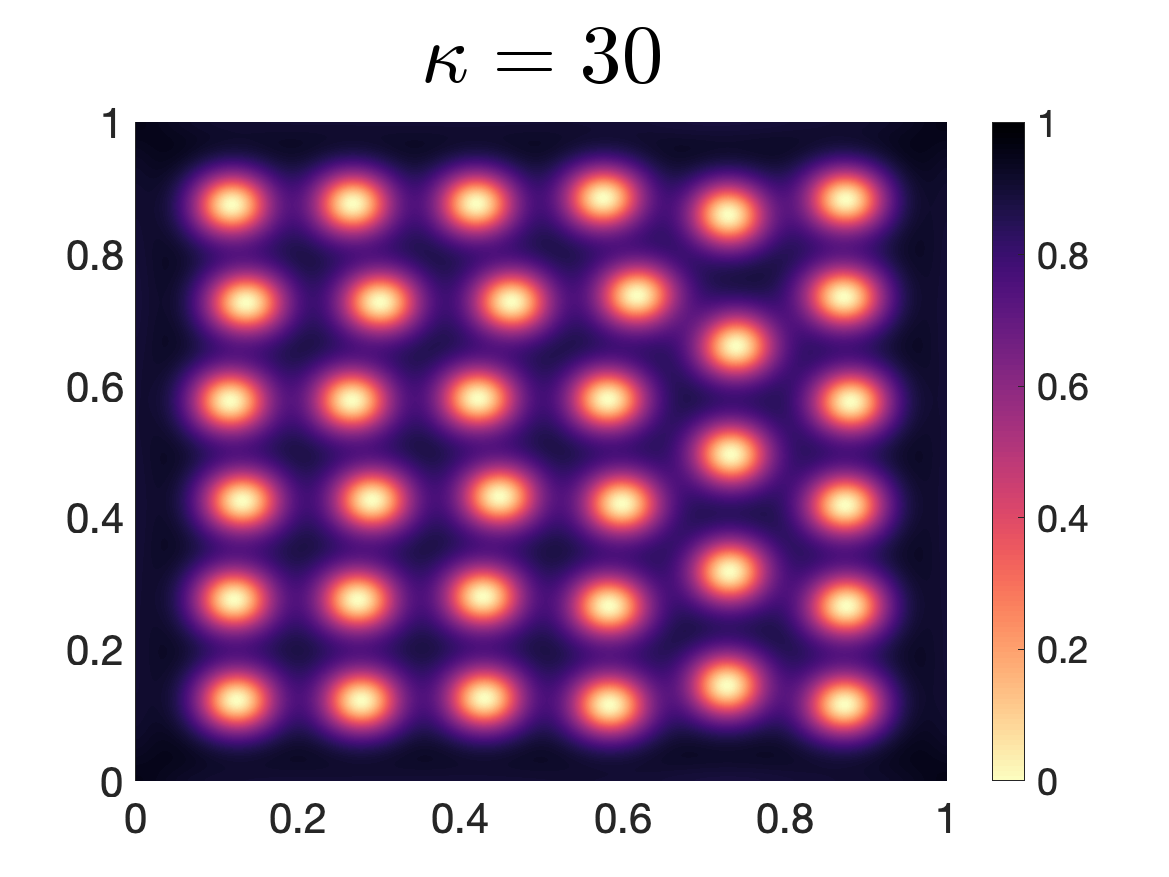}
\end{minipage} \\
\begin{minipage}{0.19\textwidth}
\includegraphics[scale=0.17]{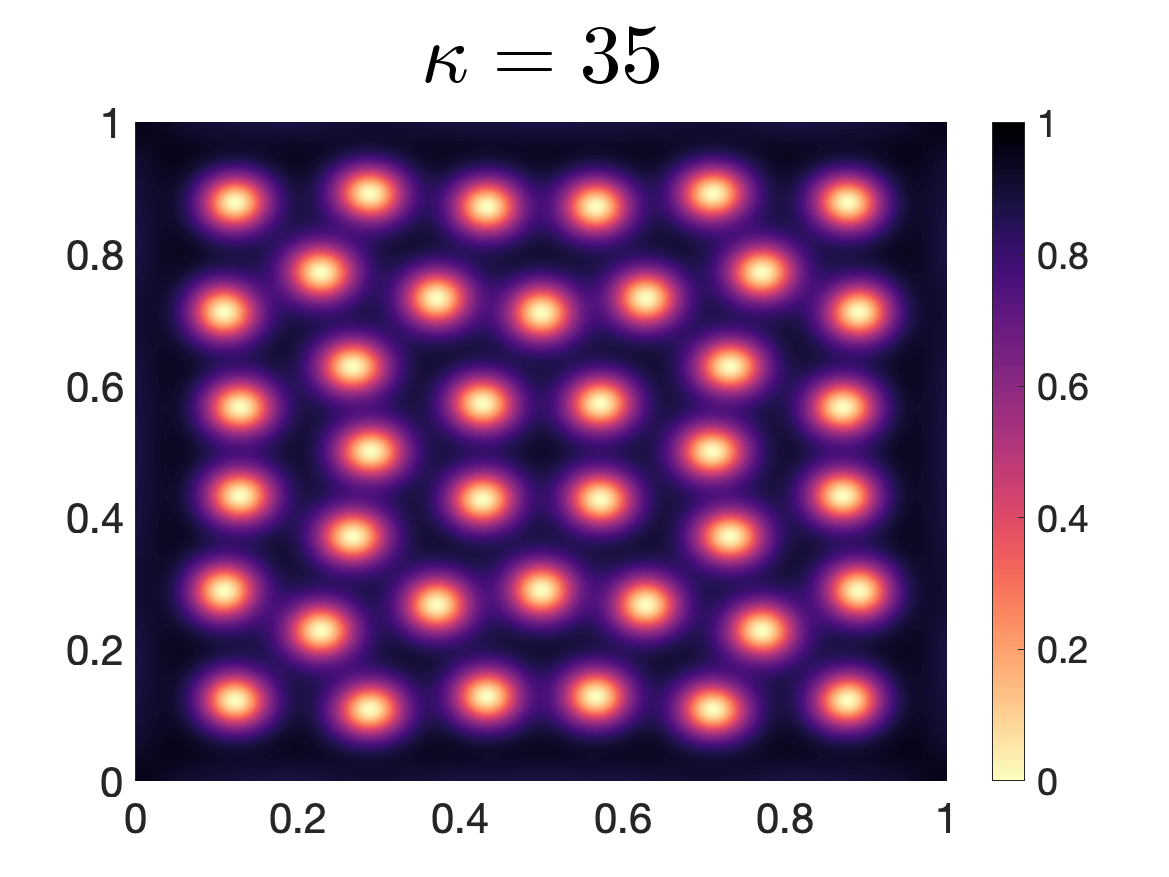}
\end{minipage}
\begin{minipage}{0.19\textwidth}
\includegraphics[scale=0.17]{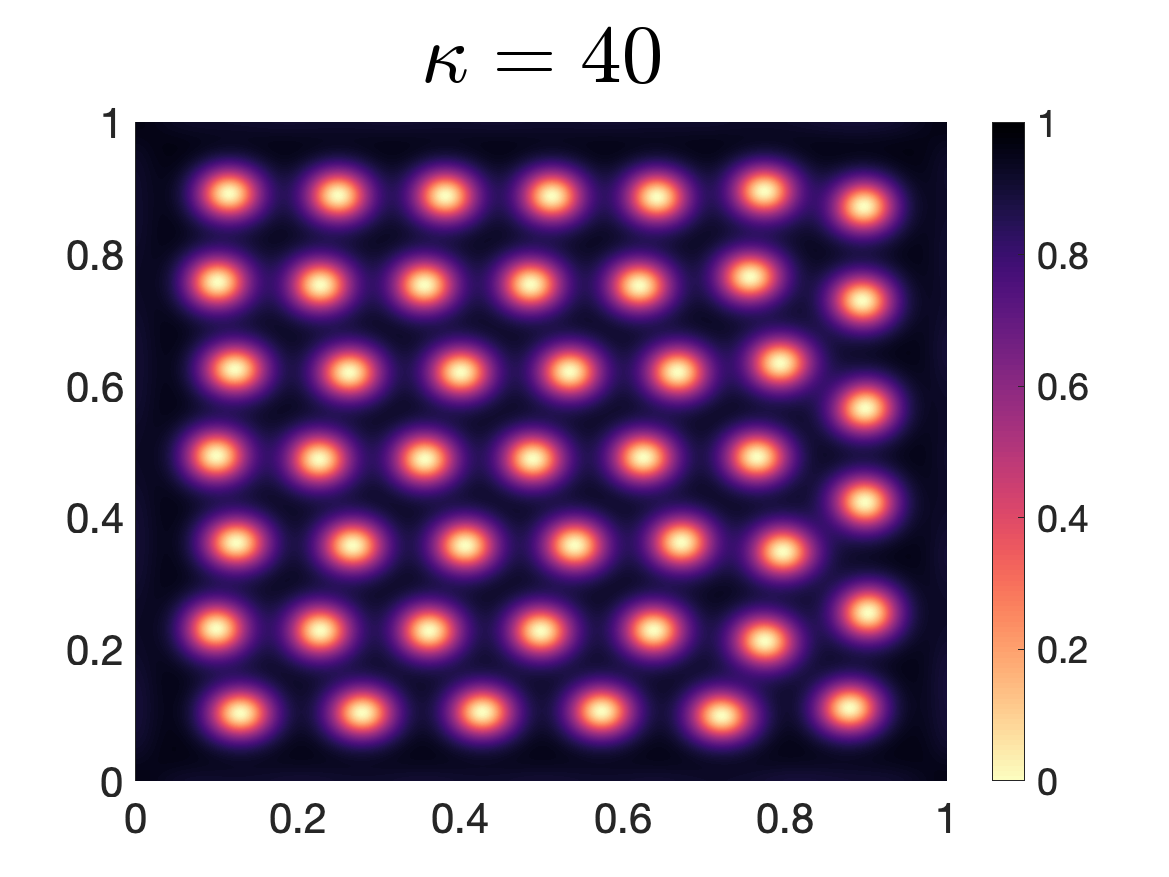}
\end{minipage}
\begin{minipage}{0.19\textwidth}
\includegraphics[scale=0.17]{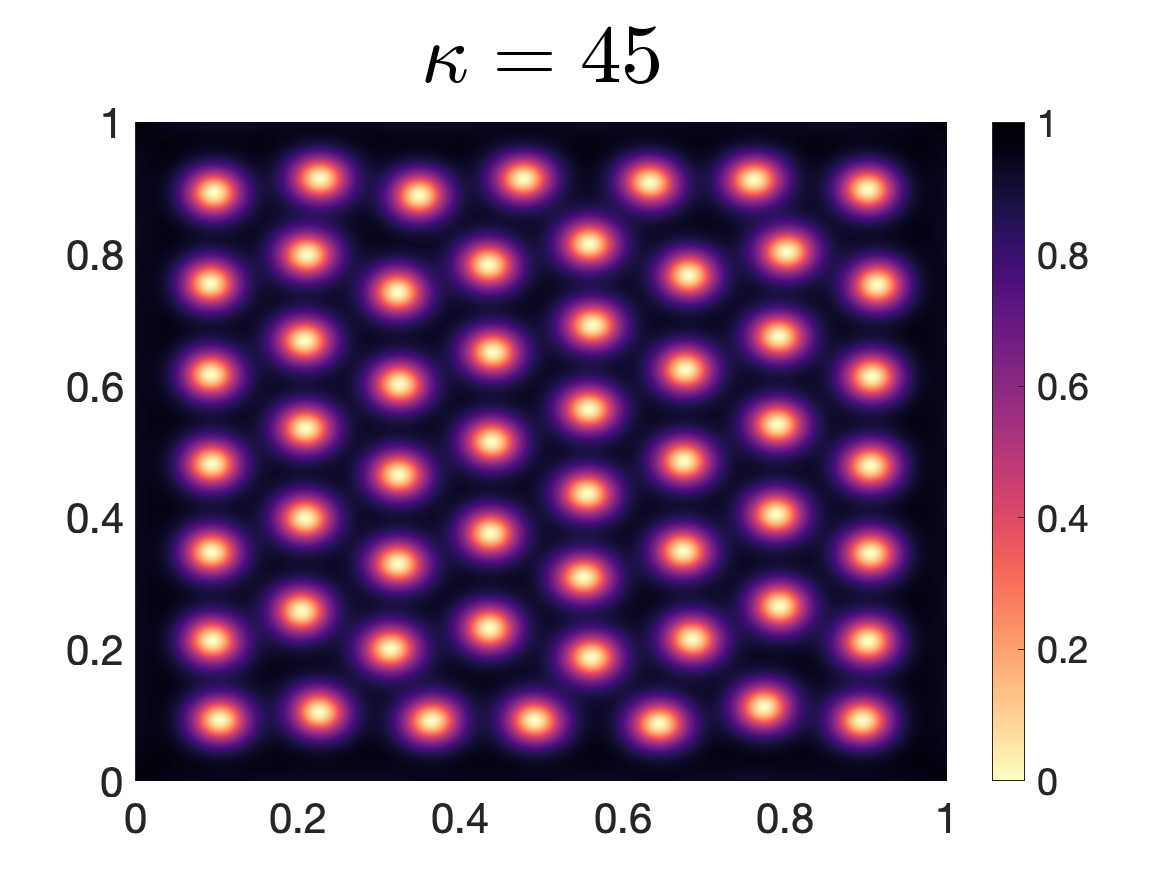}
\end{minipage}
\begin{minipage}{0.19\textwidth}
\includegraphics[scale=0.17]{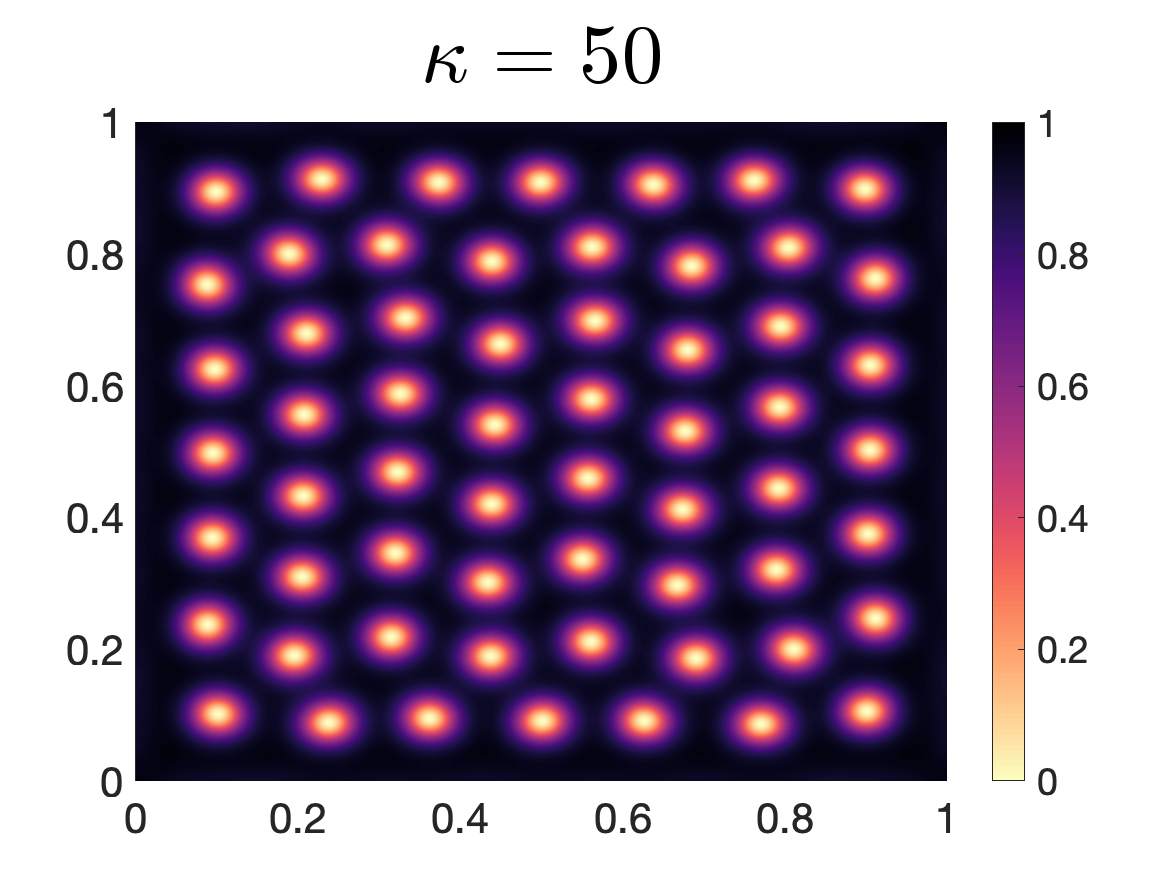}
\end{minipage}
\begin{minipage}{0.19\textwidth}
\includegraphics[scale=0.17]{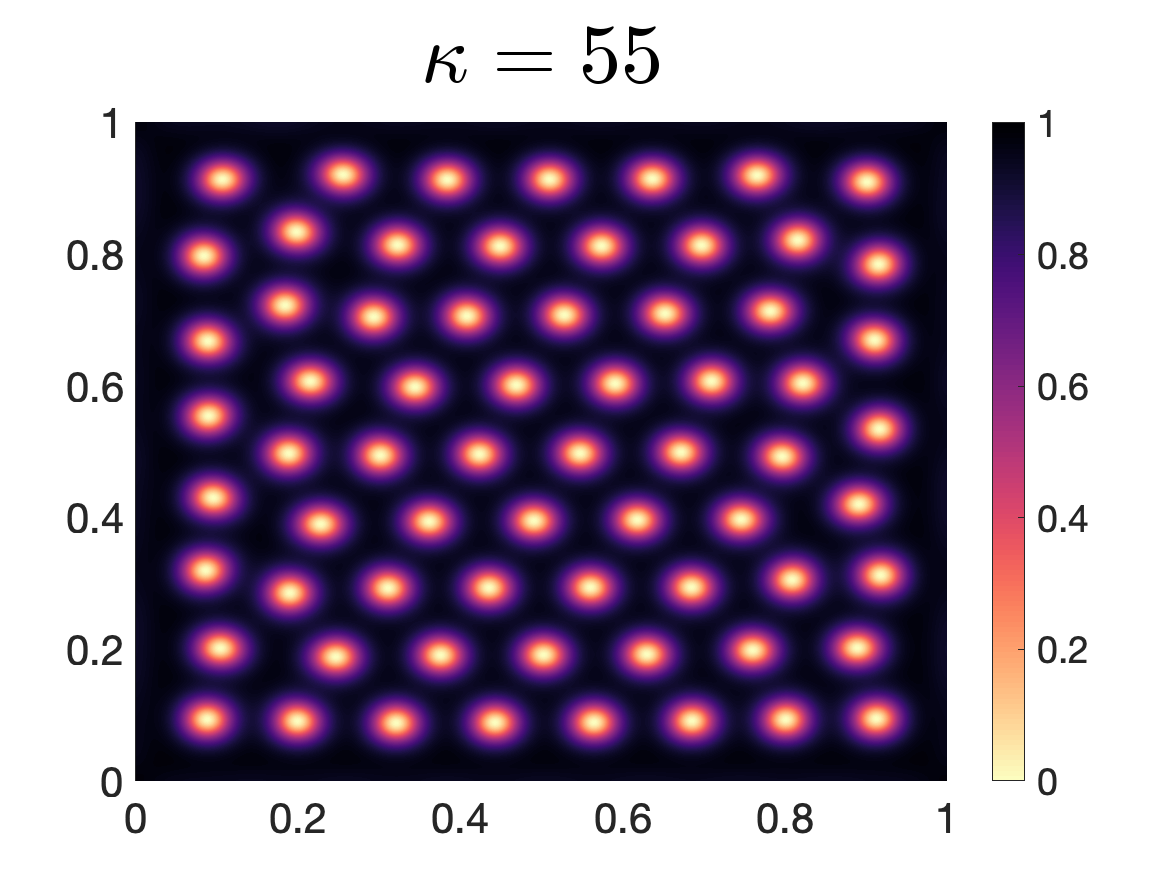}
\end{minipage} \\
\begin{minipage}{0.19\textwidth}
\includegraphics[scale=0.17]{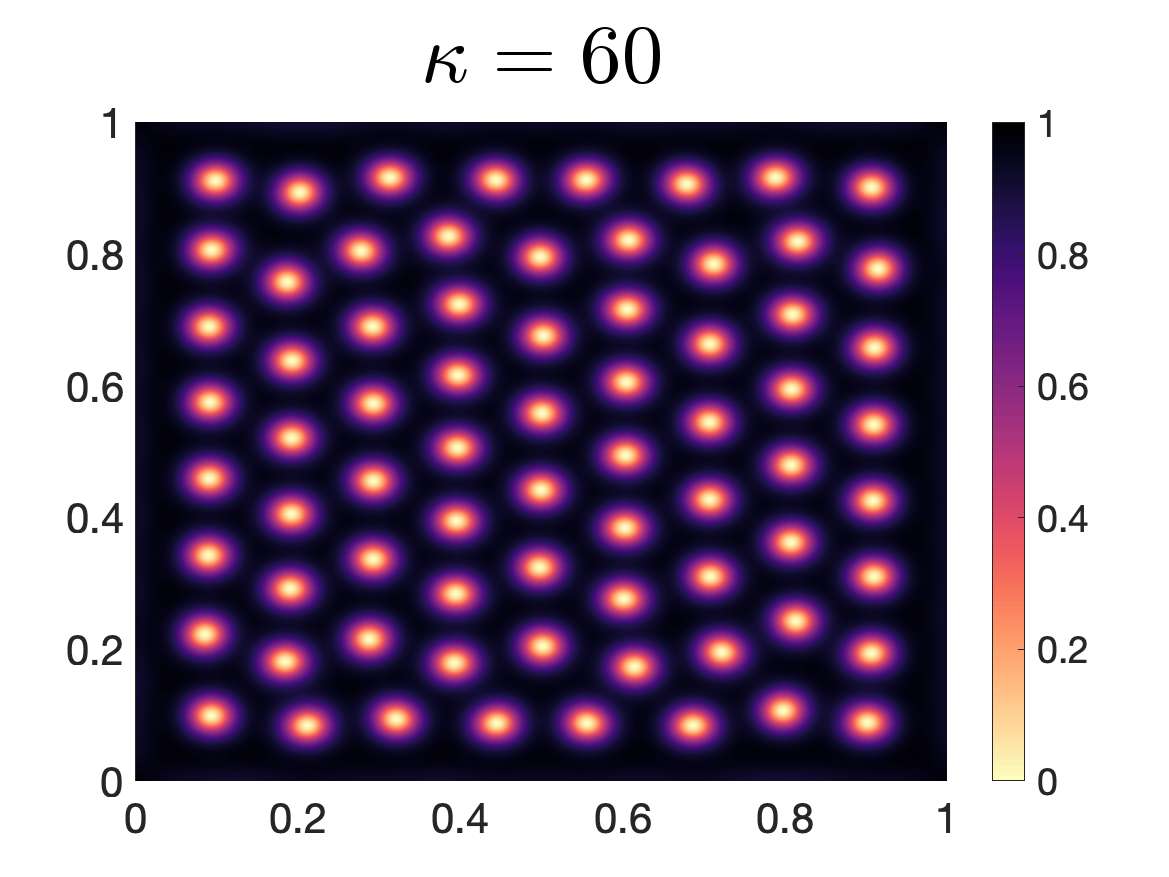}
\end{minipage}
\begin{minipage}{0.19\textwidth}
\includegraphics[scale=0.17]{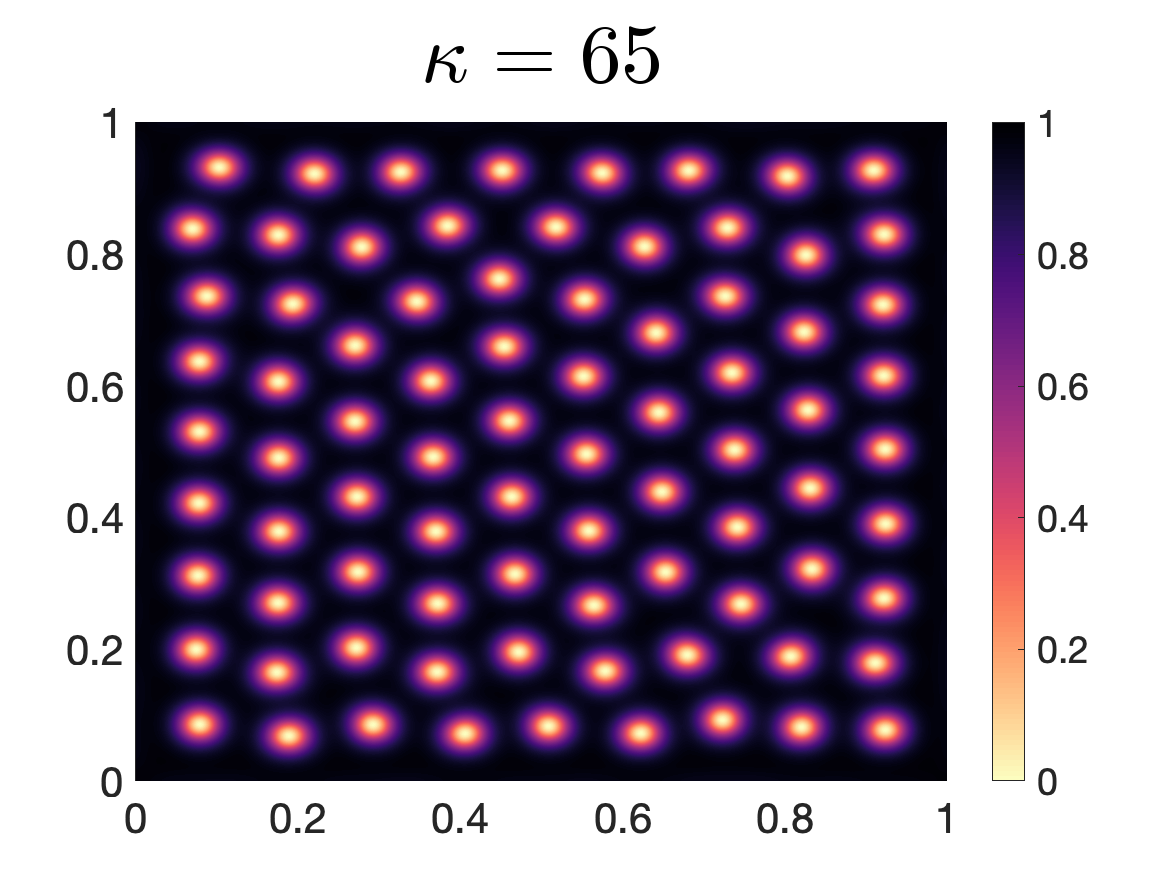}
\end{minipage}
\begin{minipage}{0.19\textwidth}
\includegraphics[scale=0.17]{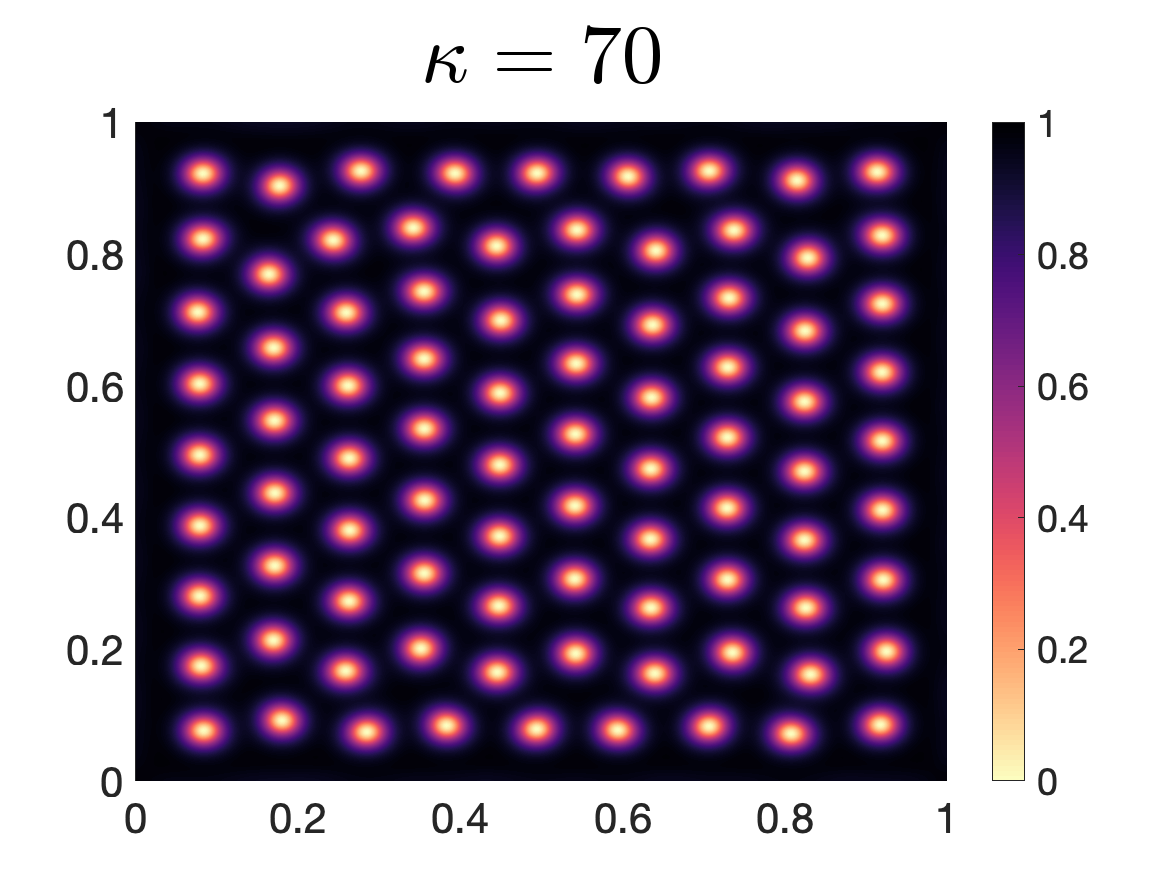}
\end{minipage}
\begin{minipage}{0.19\textwidth}
\includegraphics[scale=0.17]{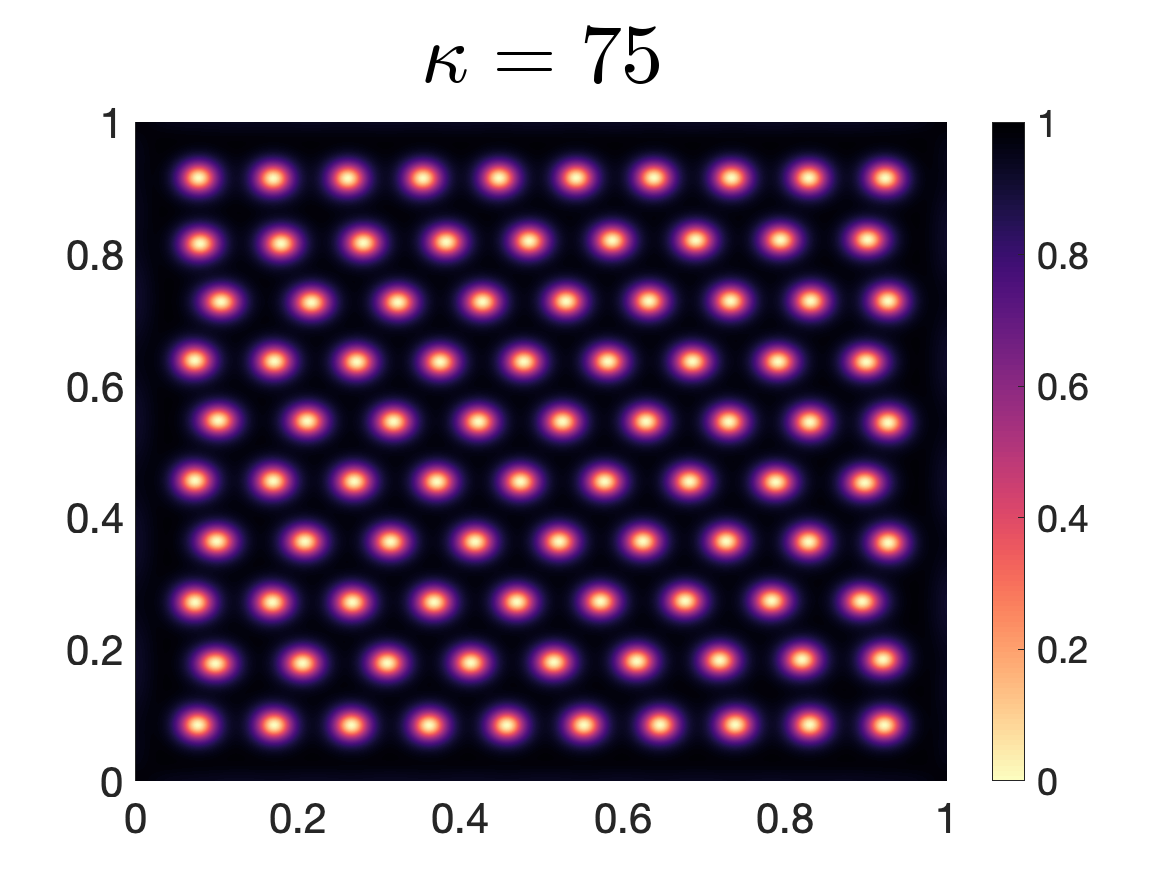}
\end{minipage}
\begin{minipage}{0.19\textwidth}
\includegraphics[scale=0.17]{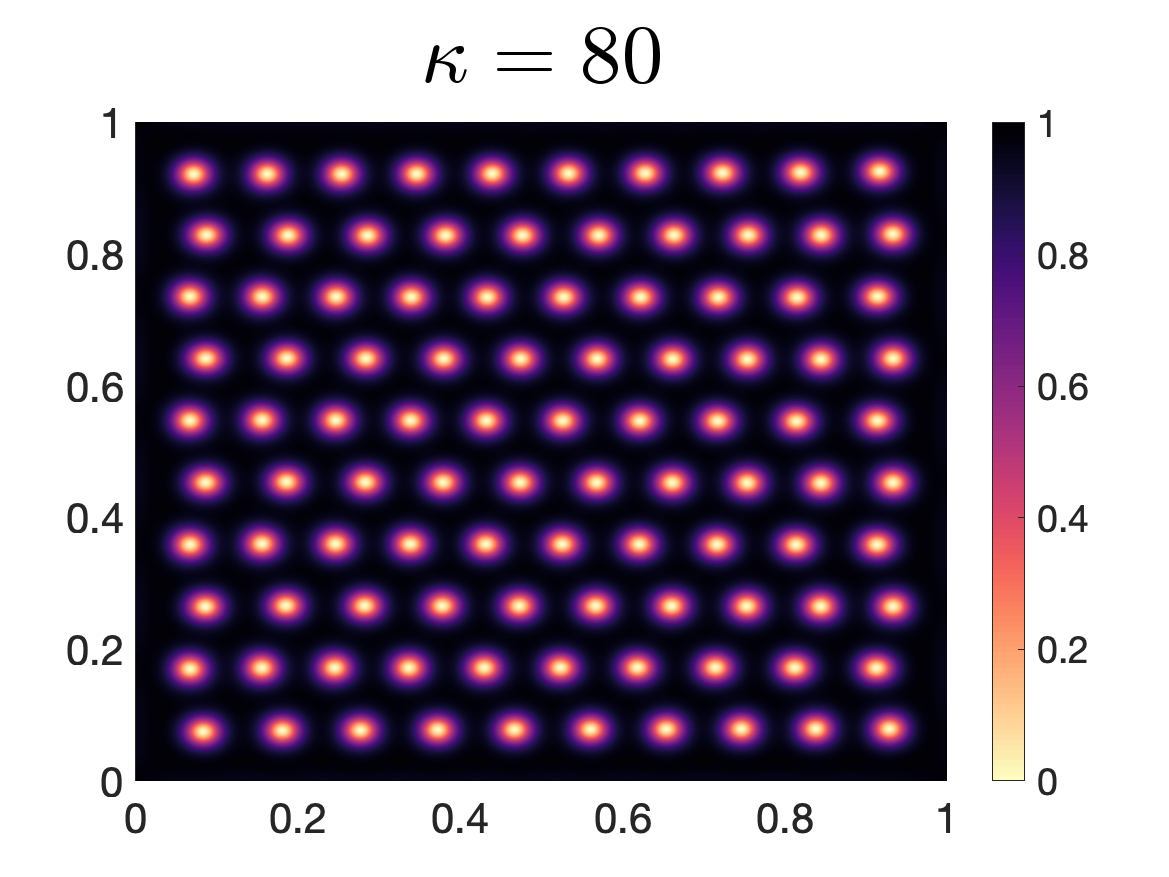}
\end{minipage} \\
\begin{minipage}{0.19\textwidth}
\includegraphics[scale=0.17]{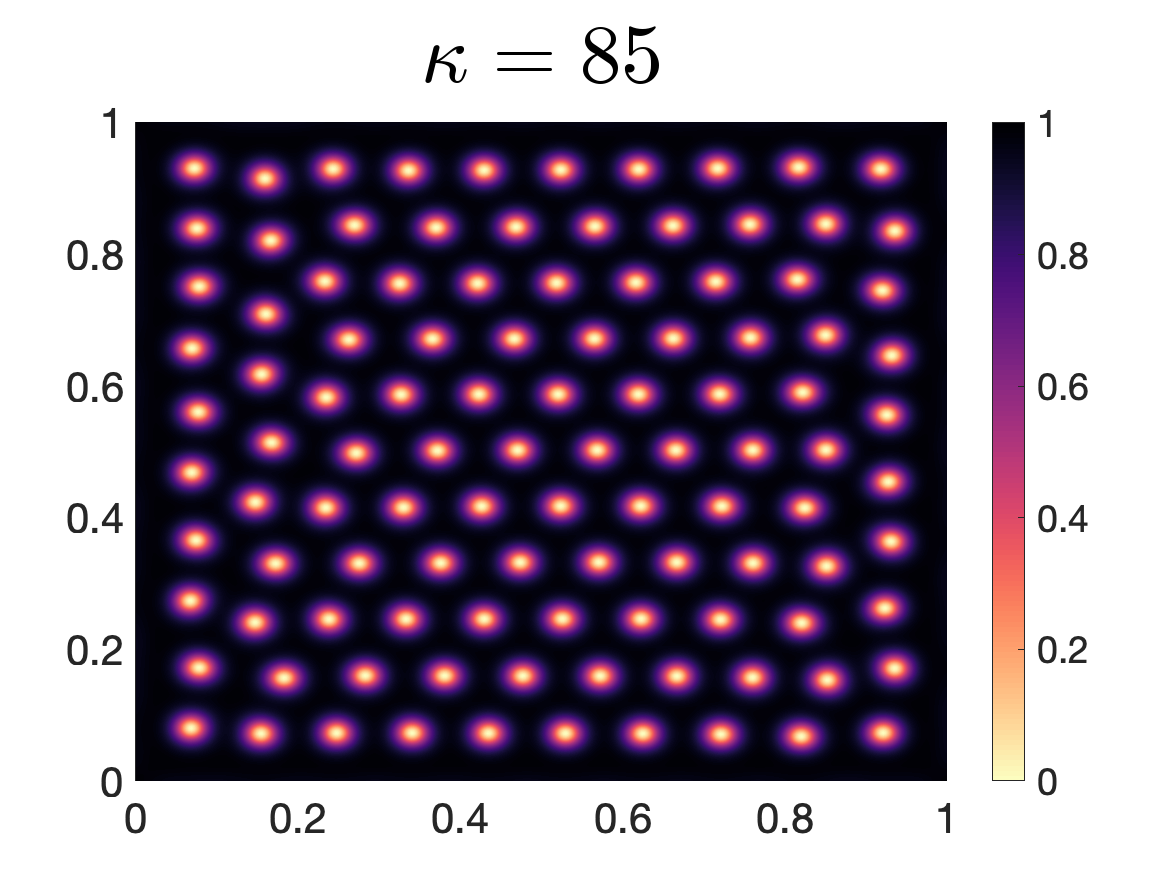}
\end{minipage}
\begin{minipage}{0.19\textwidth}
\includegraphics[scale=0.17]{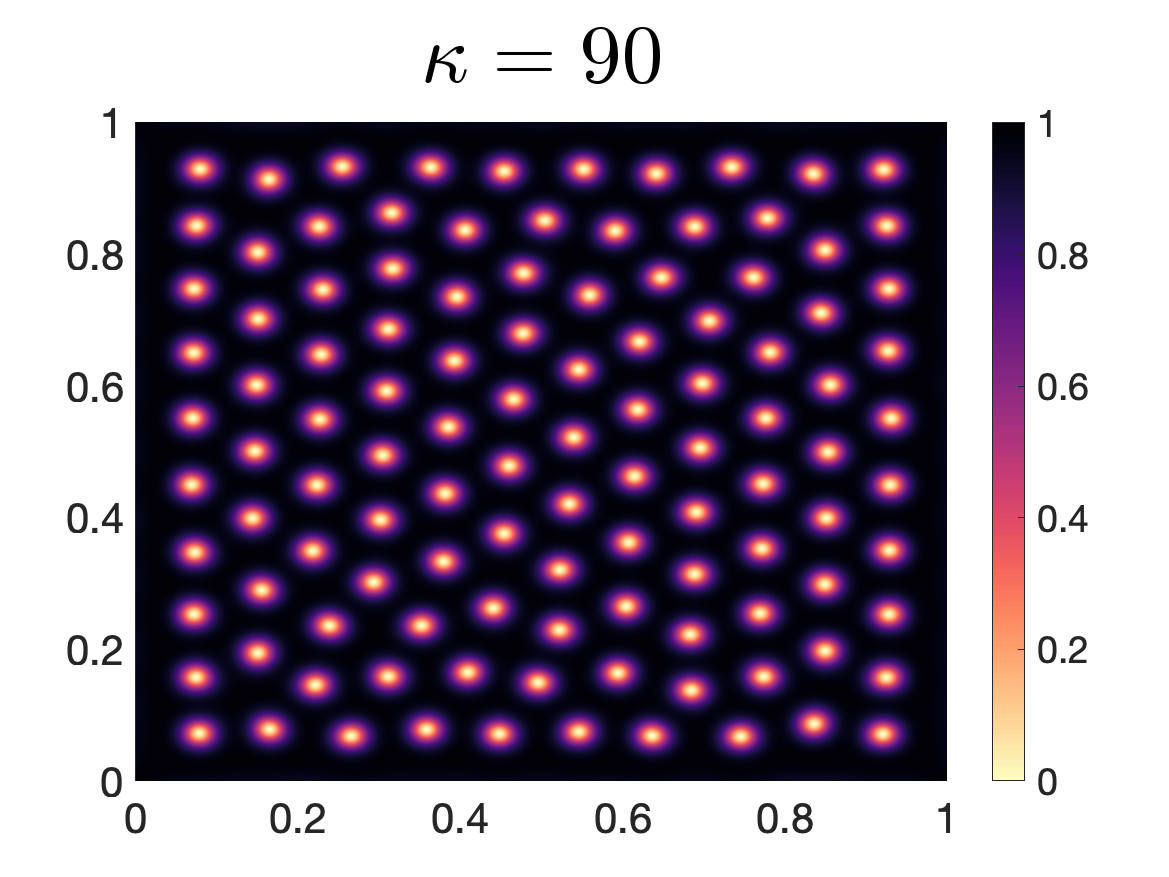}
\end{minipage}
\begin{minipage}{0.19\textwidth}
\includegraphics[scale=0.17]{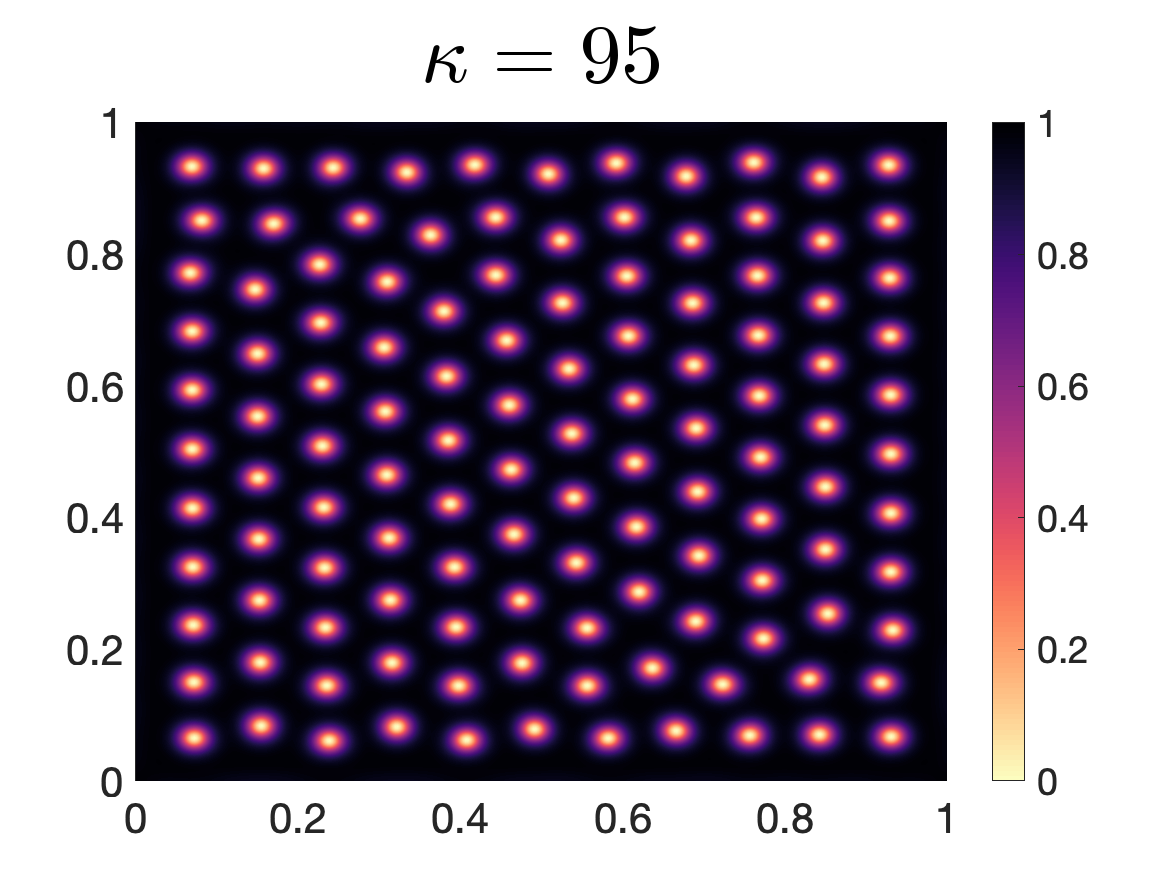}
\end{minipage}
\begin{minipage}{0.19\textwidth}
\includegraphics[scale=0.17]{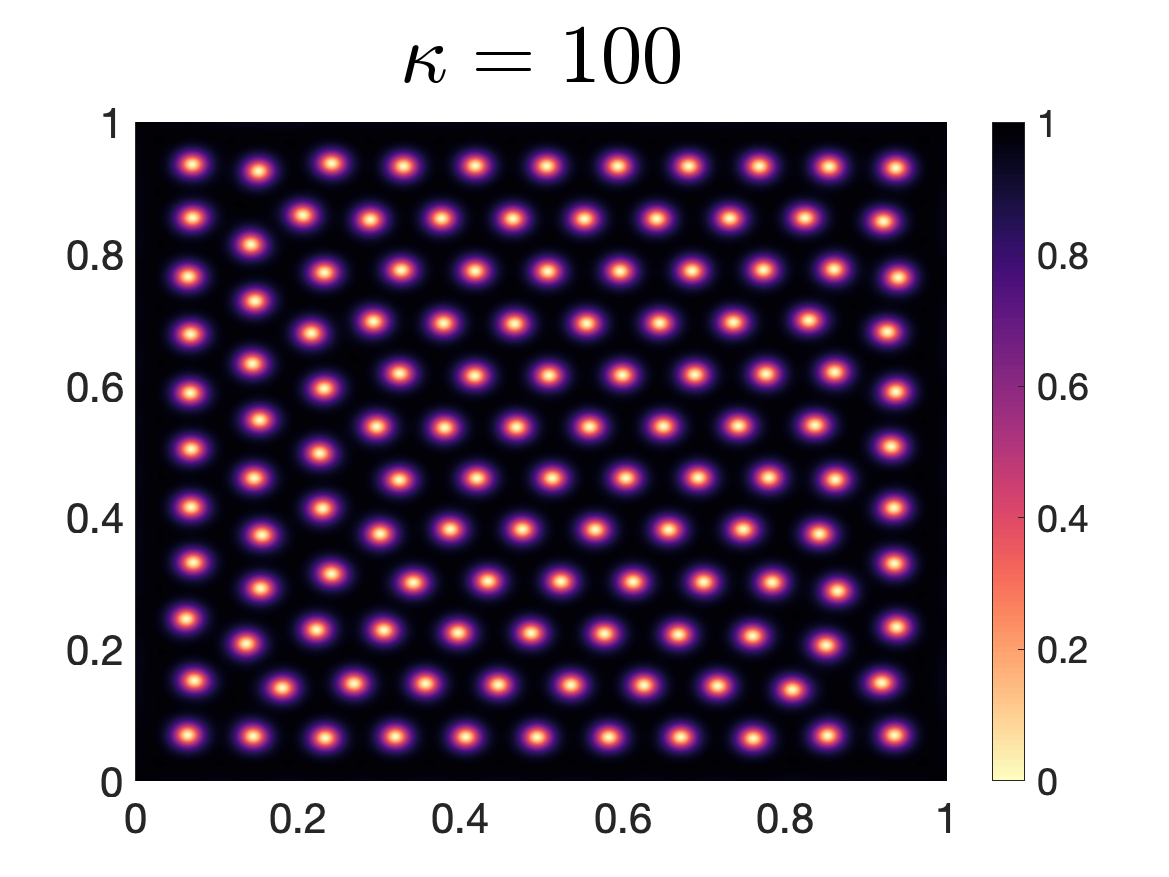}
\end{minipage}
\begin{minipage}{0.19\textwidth}
\phantom{\includegraphics[scale=0.17]{./min2_kappa100.eps}}
\end{minipage}
	\caption{Densities $|u|^2$ of the computed discrete GL energy minimizers with $\A = \A_2$ and $\kappa = 5j$, $j =2,\dots,20$.}
	\label{fig:2}
\end{figure} 

\begin{table}[h!]
\centering
\begin{tabular}{c cc cc}
\toprule
 & \multicolumn{2}{c}{$\boldsymbol{A}_1$} & \multicolumn{2}{c}{$\boldsymbol{A}_2$} \\
\cmidrule(lr){2-3} \cmidrule(lr){4-5}
$\kappa$ & $E(u)$ & $\lambda_2(\kappa)$ & $E(u)$ & $\lambda_2(\kappa)$ \\
\midrule
10  & 0.104590638 & 3.364695e-02 & 0.210302180 & 1.253325e-02 \\
15  & 0.087800780 & 5.043459e-04 & 0.175409470 & 1.277341e-03 \\
20  & 0.075291293 & 3.865896e-05 & 0.151924400 & 7.506412e-04 \\
25  & 0.064500846 & 4.387885e-05 & 0.133881341 & 3.254416e-04 \\
30  & 0.058934712 & 8.862484e-06 & 0.119779397 & 4.431788e-04 \\
35  & 0.052393817 & 5.051936e-05 & 0.109046329 & 1.906038e-04 \\
40  & 0.048082822 & 1.768819e-04 & 0.101158914 & 2.061991e-04 \\
45  & 0.045289744 & 1.287305e-04 & 0.093076130 & 7.584095e-05 \\
50  & 0.041412827 & 1.084707e-04 & 0.087120326 & 1.231738e-04 \\
55  & 0.038966075 & 4.098281e-05 & 0.081475140 & 1.364403e-04 \\
60  & 0.037183406 & 1.793039e-06 & 0.077086813 & 5.682032e-05 \\
65  & 0.034882249 & 5.122268e-07 & 0.073577591 & 6.054500e-05 \\
70  & 0.033180030 & 2.499888e-05 & 0.070531694 & 3.705614e-05 \\
75  & 0.031875279 & 3.501932e-05 & 0.066838003 & 4.146475e-05 \\
80  & 0.030282029 & 2.590564e-05 & 0.063537417 & 2.219414e-05 \\
85  & 0.029072447 & 4.891610e-07 & 0.061940527 & 2.156034e-05 \\
90  & 0.028146489 & 4.520968e-06 & 0.058974280 & 1.038614e-05 \\
95  & 0.026935874 & 1.156596e-05 & 0.057159507 & 1.165843e-05 \\
100 & 0.026059354 & 1.440620e-05 & 0.055481094 & 1.584083e-05 \\
\midrule
\multicolumn{1}{r}{} & \multicolumn{2}{c}{$\alpha \approx 2.726369$} & \multicolumn{2}{c}{$\alpha \approx 2.671856$} \\
\bottomrule
\end{tabular}
\caption{Energy levels $E(u)$ and spectral gap $\lambda_2(\kappa)$ of $E''(u)$ of the discrete minimizers for the two model problems with $\A = \A_1$ and $\A = \A_2$. The decay rate $\alpha$ of the spectral gap according to Conjecture \ref{conjecture} is calculated by least-squares regression.} \label{tab}
\end{table}

\begin{figure}[h!]
\centering
\begin{minipage}{0.45\textwidth}
\includegraphics[scale=0.4]{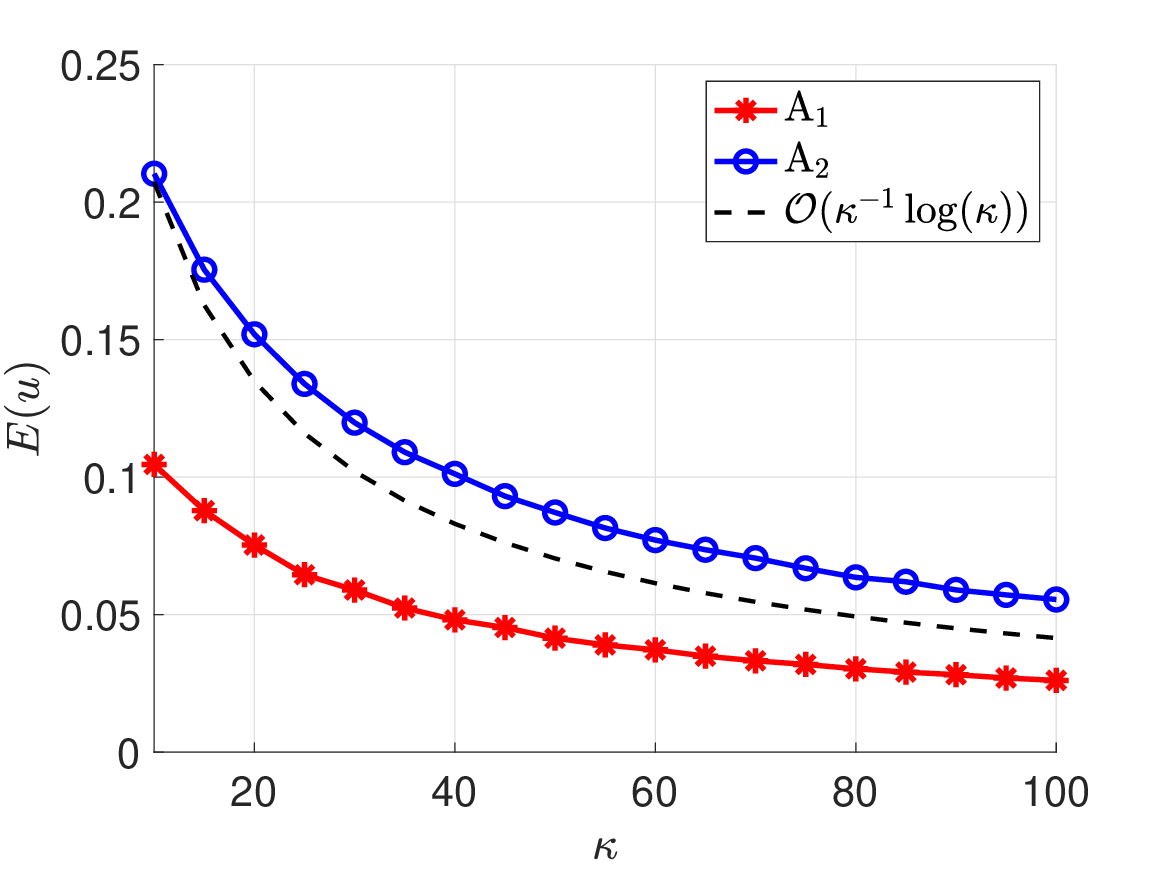}
\end{minipage}
\begin{minipage}{0.45\textwidth}
\includegraphics[scale=0.4]{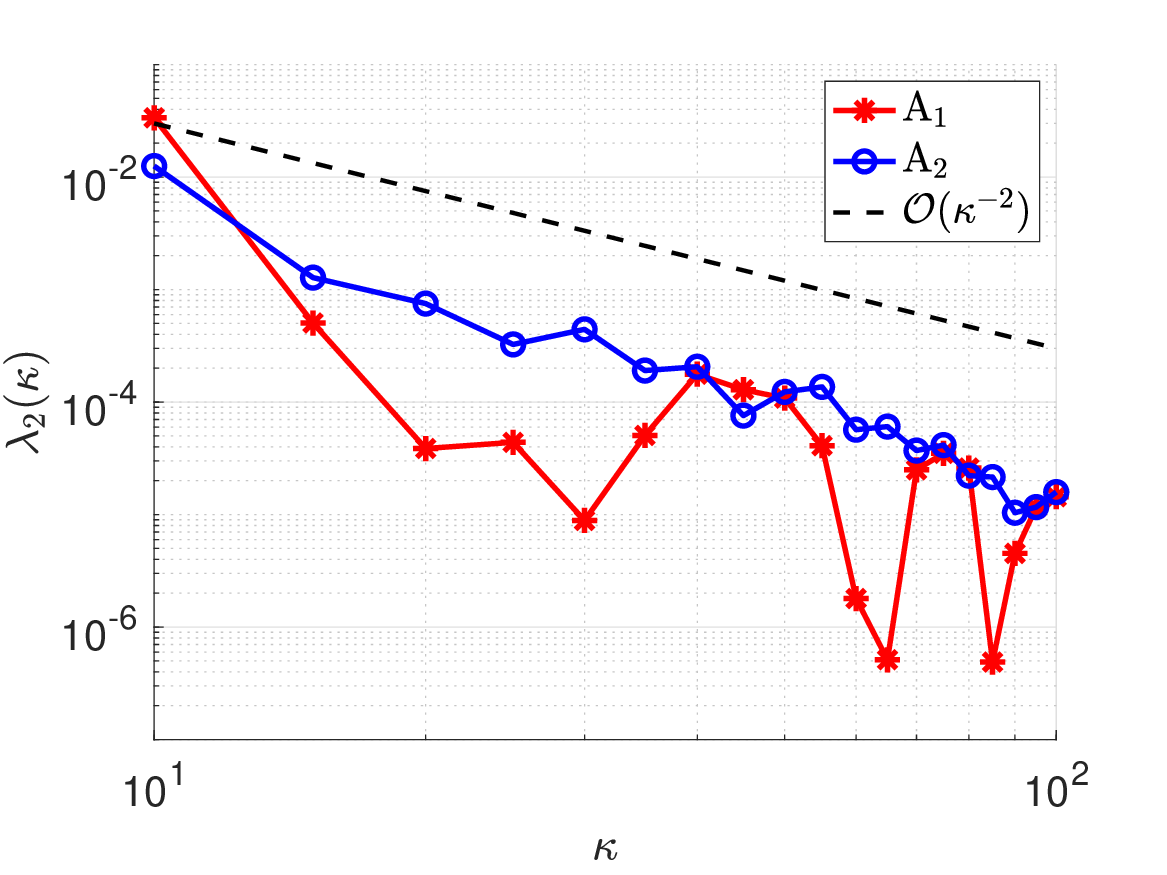}
\end{minipage}
	\caption{Energy levels $E(u)$ (left) and spectral gap $\lambda_2(\kappa)$ of $E''(u)$ for the computed discrete minimizers $u$ for both model problem $\A = \A_1$ and $\A = \A_2$.}
	\label{fig:3}
\end{figure} 

The densities $|u|^2$ of the discrete minimizers in both model settings for the varying values of $\kappa$ are depicted in Figure~\ref{fig:1} for $\mathbf{A} = \mathbf{A}_1$ and in Figure~\ref{fig:2} for $\mathbf{A} = \mathbf{A}_2$. The Abrikosov vortex lattice is clearly identifiable in the discrete minimizers. With increasing $\kappa$, the number of vortices increases while their size decreases. Comparing the two model problems, we observe that the vortex pattern is more centralized for $\mathbf{A} = \mathbf{A}_1$, which results from the fact that the vector potential decays to zero at the boundary. In contrast, for $\mathbf{A} = \mathbf{A}_2$, the magnetic field remains constant up to the boundary, and the vortex pattern is homogeneously distributed over the domain $\Omega$. Furthermore, in the latter case and for larger values of $\kappa$, the vortices align in a triangular/hexagonal lattice, consistent with the physical and mathematical theory for a constant magnetic field \cite{SaS07,Tinkham}.

In Table~\ref{tab} and Figure~\ref{fig:3} (left), we show the energy levels $E(u)$ of the computed discrete minimizers for varying values of $\kappa$. In particular, Figure~\ref{fig:3} (left) illustrates the decay of the energy with increasing $\kappa$. This decay can be identified with the rate $\mathcal{O}(\kappa^{-1} \log \kappa)$, which agrees with the asymptotic analysis in \cite[Theorem 8.1]{SaS07} in the high-$\kappa$ regime.

Finally, we use this data to verify Conjecture~\ref{conjecture}. For each discrete minimizer $u$, we compute the spectrum of the second Fr\'echet derivative, and in particular the spectral gap, given by the second smallest eigenvalue $\lambda_2(\kappa)$. The size of the spectral gap is shown in Figure~\ref{fig:3} (right), and the particular values are reported in Table~\ref{tab} for both model problems. Recall that the coercivity constant from Lemma \ref{lem:coe} we are interested in is bounded by the spectral gap, i.e., $\rho(\kappa)^{-1} \lesssim \lambda_2(\kappa)$.

As a function of $\kappa$, the spectral gap clearly decays, leading to a corresponding decay of the coercivity constant $\rho(\kappa)^{-1}$, as predicted by Conjecture~\ref{conjecture}. Up to small pollution, this decay can be described by $\lambda_2(\kappa) \ls \kappa^{-\alpha}$. The rate $\alpha$ can be estimated using a least-squares regression. For the first model problem ($\A = \A_1$), we calculate $\alpha \approx 2.726369$, whereas for the second model problem with a homogeneous magnetic field ($\A = \A_2$), we find $\alpha \approx 2.671856$. In both cases, a rough estimate of the decay rate yields $\alpha \approx 2$ for which we therefore have shown strong numerical evidence.

In summary, these results provide numerical evidence supporting Conjecture~\ref{conjecture}, indicating that the decay rate is at least of order $2$. Consequently, we obtain the overall estimate $\rho(\kappa) \ls \lambda_2(\kappa) \ls \kappa^{-2}$, which is confirmed numerically.

\textbf{Acknowledgement.} This work was funded by the Deutsche Forschungsgemeinschaft (DFG, German Research Foundation) under Germany's Excellence Strategy – EXC-2047/1 – 390685813.

\end{document}